\begin{document}
\markboth{V.~Novakovi\'{c}}{Recursive vectorized computation of the vector $p$-norm}
%
%
\catchline{}{}{}{}{}
%
%
\title{Recursive vectorized computation of the vector $p$-norm}
\author{Vedran Novakovi\'{c}}
\address{independent researcher, \url{https://orcid.org/0000-0003-2964-9674}\\
  Vankina ulica 15, HR-10020 Zagreb, Croatia\\
  e-mail address: \texttt{venovako@venovako.eu}}
\maketitle
\begin{history}
\received{(received date)}
\revised{(revised date)}
\end{history}
\begin{abstract}
  Recursive algorithms for computing the Frobenius norm of a real
  array are proposed, based on $\mathrm{hypot}$, a hypotenuse
  function.  Comparing their relative accuracy bounds with those of
  the BLAS routine $\mathtt{DNRM2}$ it is shown that the proposed
  algorithms could in many cases be significantly more accurate.  The
  scalar recursive algorithms are vectorized with the Intel's vector
  instructions to achieve performance comparable to $\mathtt{DNRM2}$,
  and are further parallelized with OpenCilk.  Some scalar algorithms
  are unconditionally bitwise reproducible, while the reproducibility
  of the vector ones depends on the vector width.  A modification of
  the proposed algorithms to compute the vector $p$-norm is also
  presented.
\end{abstract}
\keywords{Frobenius norm; AVX-512 vectorization; roundoff analysis; vector $p$-norm.}
\category{Mathematics Subject Classification (2020)}{65F35, 65Y05, 65G50}
\begin{otherinfo}
  Supplementary material, including an implementation of the proposed
  algorithms, is available in
  \url{https://github.com/venovako/VecNrmP} and
  \url{https://github.com/venovako/libpvn} repositories.
\end{otherinfo}
\newlength\fbw
\newlength\fbh
\newlength\fba
\newlength\fbt
\lstloadlanguages{C}
\lstset{language=C,extendedchars=false,numbers=left,numberstyle=\tiny,frame=lines,basicstyle=\small\ttfamily,commentstyle=\small\normalfont,columns=fullflexible,texcl=false,mathescape=true,lineskip=.25\baselineskip}
%
%
\section{Introduction}\label{s:1}
%
%
For a real $p\ge 1$, the vector $p$-norm (or $\ell^p$ norm) of an
array $\mathbf{x}$ is defined as
\begin{equation}
  \|\mathbf{x}\|_p^{}=\left(\sum_{i=1}^n|x_i^{}|^p\right)^{1/p},\qquad
  \mathbf{x}=[x_1\cdots x_n],\label{e:p}
\end{equation}
with the most common instances of $p$ in algorithms of numerical
linear algebra being $p=1$, $p=2$, and $p=\infty$, i.e.,
\begin{equation}
  \|\mathbf{x}\|_1^{}=\sum_{i=1}^n|x_i^{}|,\qquad
  \|\mathbf{x}\|_2^{}=\sqrt{\sum_{i=1}^n|x_i^{}|^2},\qquad
  \|\mathbf{x}\|_{\infty}^{}=\max_{i=1,\ldots,n}^{}|x_i^{}|,
  \label{e:12I}
\end{equation}
where the vector $2$-norm is often called Frobenius and denoted by
$\|\mathbf{x}\|_F$.  The widely used routine $\mathtt{xNRM2}$ for
computation of the Frobenius norm of a one-dimensional real array
without undue overflow, as implemented in the Reference BLAS in
Fortran\footnote{See
\url{https://github.com/Reference-LAPACK/lapack/blob/master/BLAS/SRC/dnrm2.f90}
(double precision) in the Reference LAPACK~\cite{Anderson-et-al-99}
repository, or \texttt{snrm2.f90} for the single precision version.},
is sequential and prone to the accumulation of rounding errors, and to
other numerical issues, for inputs with a large number of elements.
This work proposes an alternative algorithm, $\mathtt{xNRMF}$, that
improves the theoretical error bounds (due to its recursive nature)
and the observed accuracy on large random inputs with the moderately
varying magnitudes of the elements, while still exhibiting comparable
performance (due to vectorization), in single
($\mathtt{x}=\mathtt{S}$) and double ($\mathtt{x}=\mathtt{D}$)
precision.

It suffices to focus on reals arrays only, since
$|x_i^{}|^2=(\Re{x_i^{}})^2+(\Im{x_i^{}})^2$ for a complex $x_i$.  The
Frobenius norm of a multi-dimensional array (e.g., a matrix) can be
constructed from the norms of its lower-dimensional subarrays (e.g.,
columns), and thus only one-dimensional arrays are considered.  The
norm of a scalar $x$ is $|x|$.

Even though $\mathtt{xNRMF}$ is not the most performant stable
norm-computation routine available, one of its strengths is that it is
conceptually simple, and another one is that it can be generalized to
the $\mathtt{xNRMP}$ routine that computes the $p$-norm~\eqref{e:p}
for not too large values of $p$, while still avoiding overflow of
intermediate results.  For clarity, $\mathtt{xNRMF}$ is described in
detail first, and then $\mathtt{xNRMP}$ is derived from it.

In this work several norm computation algorithms are presented, and
their accuracy and performance are discussed.  Table~1 introduces a
notation for the algorithms to be described in the following, that are
implemented in the two standard floating-point datatypes, with the
associated machine precisions $\varepsilon_{\mathtt{S}}=2^{-24}$ and
$\varepsilon_{\mathtt{D}}=2^{-53}$, due to the assumed rounding to
nearest.  There, $L_{\mathtt{x}}$ stands for the $\mathtt{xNRM2}$
routine.

\begin{table}
  \begin{minipage}{\textwidth}\tablecaptionfont
    Table~1: A categorization of the considered norm computation
    algorithms.  The algorithm $M_{\mathtt{x}}$,
    $M\in\{A,B,C,H,L,X,Y,Z\}$ and
    $\mathtt{x}\in\{\mathtt{S},\mathtt{D}\}$, requires either scalar
    arithmetic or vector registers with $\mathfrak{p}>1$ lanes of the
    corresponding scalar datatype (in C, \texttt{float} for
    $\mathtt{x}=\mathtt{S}$ or \texttt{double} for
    $\mathtt{x}=\mathtt{D}$).
  \end{minipage}\\[1ex]
  \centering
  \begin{tabular}{@{}rcc@{}}\toprule
    \null & scalar & vectorized\\\midrule
    recursive & $A,B,H$ & $X,Y,Z$\\
    iterative & $C,L$ & ---\\
    \bottomrule
  \end{tabular}\hfill
  \begin{tabular}{@{}cr@{}}\toprule
    $M_{\mathtt{x}}$ & $\mathfrak{p}$\\\midrule
    $A_{\mathtt{S}},B_{\mathtt{S}},C_{\mathtt{S}},H_{\mathtt{S}},L_{\mathtt{S}}$ & 1\\
    $A_{\mathtt{D}},B_{\mathtt{D}},C_{\mathtt{D}},H_{\mathtt{D}},L_{\mathtt{D}}$ & 1\\
    \bottomrule
  \end{tabular}\hfill
  \begin{tabular}{@{}cr@{}}\toprule
    $M_{\mathtt{x}}$ & $\mathfrak{p}$\\\midrule
    $X_{\mathtt{S}}$ & 4\\
    $X_{\mathtt{D}}$ & 2\\
    \bottomrule
  \end{tabular}\hfill
  \begin{tabular}{@{}cr@{}}\toprule
    $M_{\mathtt{x}}$ & $\mathfrak{p}$\\\midrule
    $Y_{\mathtt{S}}$ & 8\\
    $Y_{\mathtt{D}}$ & 4\\
    \bottomrule
  \end{tabular}\hfill
  \begin{tabular}{@{}cr@{}}\toprule
    $M_{\mathtt{x}}$ & $\mathfrak{p}$\\\midrule
    $Z_{\mathtt{S}}$ & 16\\
    $Z_{\mathtt{D}}$ & 8\\
    \bottomrule
  \end{tabular}
\end{table}

Based on~\cite{Blue-78,Anderson-17}, $L_{\mathtt{x}}$ maintains the
three accumulators, $\mathtt{sml}$, $\mathtt{med}$, and
$\mathtt{big}$, each of which holds the current, scaled partial sum of
squares of the input elements of a ``small'', ``medium'', or ``big''
magnitude, respectively.  For each $i$, $1\le i\le n$, a small input
element $x_i$ is upscaled, or a big one downscaled, by a suitable
power of two, to prevent under/over-flow, getting $x_i'$, while
$x_i'=x_i^{}$ for a medium $x_i^{}$.  The appropriate accumulator
$\mathtt{acc}$ is then updated, under certain conditions, as
\begin{equation}
  \mathtt{acc}:=\mathtt{acc}+x_i'\cdot x_i',\quad
  \mathtt{acc}\in\{\mathtt{sml},\mathtt{med},\mathtt{big}\},
  \label{e:Lacc}
\end{equation}
what is compiled to a machine equivalent of the C code
$\mathtt{acc}=\mathop{\mathrm{fma[f]}}(x_i',x_i',\mathtt{acc})$, where
$\mathrm{fma}$ denotes the fused multiply-add instruction, with a
single rounding of the result, in double ($\mathrm{fmaf}$ in single)
precision, i.e.,
$\mathop{\mathrm{fma[f]}}(x,y,z)=(x\cdot y+z)(1+\epsilon_{\mathrm{f}})$,
where $|\epsilon_{\mathrm{f}}|\le\varepsilon_{\mathtt{x}}$.  After all
input elements have been processed, $\mathtt{sml}$ and $\mathtt{med}$,
or $\mathtt{med}$ and $\mathtt{big}$, are combined into the final
approximation of $\|\mathbf{x}\|_F$.  If all input elements are of the
medium magnitude, $L_\mathtt{x}$ effectively computes the sum of
squares from~\eqref{e:12I}, \emph{iteratively} from the first to the
last element, using~\eqref{e:Lacc}, and returns its square root.

However, as observed in~\cite[Supplement Sect.~3.1]{Novakovic-26},
$\|\mathbf{x}\|_F$ can be computed without explicitly squaring any
input element.  With the function $\mathrm{hypot[f]}$, defined as
\begin{equation}
  \mathop{\mathrm{hypot[f]}}(x,y)=\sqrt{x^2+y^2}(1+\epsilon_{\mathrm{h}}),
  \label{e:hypot}
\end{equation}
and standardized in the C and Fortran programming languages, it holds
\begin{equation}
  \|[x_1]\|_F=|x_1|,\quad
  \underline{\|[x_1\cdots x_i]\|_F}=\mathop{\mathrm{hypot[f]}}(\underline{\|[x_1\cdots x_{i-1}]\|_F},x_i),\quad
  2\le i\le n,
  \label{e:hi}
\end{equation}
where $\underline{x}$ denotes a floating-point approximation of the
value of the expression $x$.

There are many implementations of $\mathrm{hypot[f]}$ in use, that
differ in accuracy and performance.  A hypotenuse function well suited
for this work's purpose should avoid undue underflow and overflow, be
monotonically non-decreasing with respect to $|x|$ and $|y|$, and be
reasonably accurate, i.e.,
$|\epsilon_{\mathrm{h}}|\le c\varepsilon_{\mathtt{x}}$ for a small
enough $c\ge 1$.  The CORE-MATH project~\cite{Sibidanov-et-al-22} has
developed the \emph{correctly rounded} hypotenuse functions in single,
double, extended, and quadruple precisions\footnote{See
\url{https://core-math.gitlabpages.inria.fr} for further information
and the source code.}.  Such functions, where
$|\epsilon_{\mathtt{h}}|\le\varepsilon_{\mathtt{x}}$, are standardized
as optional in the C language, and are named with the ``cr\_'' prefix,
e.g., $\mathrm{cr\_hypot}$.  Another attempt at developing an accurate
hypotenuse routine is~\cite{Borges-20}.  Some C compilers can be asked
to provide an implementation by the $\mathrm{\_\_builtin\_hypot[f]}$
intrinsic, what might be the C math library's function, possibly
faster than a correctly rounded one.  When not stated otherwise,
$\mathrm{hypot[f]}$ stands for any of those, and for the other scalar
hypotenuse functions to be introduced here.

If instead of two scalars, $x$ and $y$, two vectors $\mathsf{x}$ and
$\mathsf{y}$, each with $\mathfrak{p}>1$ lanes, are given, then
$\mathfrak{p}$ scalar hypotenuses can be computed in parallel, in the
SIMD (Single Instruction, Multiple Data) fashion, such that a new
vector $\mathsf{h}$ is formed as
\begin{equation}
  \mathsf{h}=\mathop{\mathrm{v}\mathfrak{p}\mathrm{\_hypot[f]}}(\mathsf{x},\mathsf{y}),\qquad
  \mathsf{h}_{\ell}=\mathop{\mathrm{v1\_hypot[f]}}(\mathsf{x}_{\ell},\mathsf{y}_{\ell}),\quad
  1\le\ell\le\mathfrak{p},
  \label{e:vh}
\end{equation}
where $\ell$ indexes the vector lanes, and $\mathrm{v1\_hypot[f]}$
denotes an operation that approximates the hypotenuse of the scalars
$\mathsf{x}_{\ell}$ and $\mathsf{y}_{\ell}$ from each lane.  This
operation has to be carefully implemented to avoid branching.  A
vectorized hypotenuse function
$\mathrm{v}\mathfrak{p}\mathrm{\_hypot[f]}$ can be thought of as
applying $\mathrm{v1\_hypot[f]}$ independently and simultaneously to
$\mathfrak{p}$ pairs of scalar inputs.  The Intel's C/C++ compiler
offers such intrinsics; e.g., in double precision with the AVX-512F
vector instruction set (and thus $\mathfrak{p}=8$),
\begin{displaymath}
  \mathtt{\_\_m512d}\ \mathsf{x},\mathsf{y};\quad
  \mathop{\mathrm{v8\_hypot}}(\mathsf{x},\mathsf{y})=\mathop{\mathtt{\_mm512\_hypot\_pd}}(\mathsf{x},\mathsf{y}),
\end{displaymath}
but its exact $\mathrm{v1\_hypot}$ operation is not public, and
therefore cannot be easily ported to other platforms by independent
parties, unlike the vectorized hypotenuse from the SLEEF
library~\cite{Shibata-Petrogalli-20} or the similar one
from~\cite{Novakovic-23}, which is adapted to the
$\text{SSE2}+\text{FMA}$ and $\text{AVX2}+\text{FMA}$ instruction
sets, alongside the AVX-512F, in the following.

Note that~\eqref{e:hi} is a special case of a more general relation.
Let\footnote{Here, and until the $p$-norms are discussed in
Section~\ref{s:4}, the symbol $p$ is used unrelatedly to them.}
$\{i_1,\ldots,i_p\}$ and $\{j_1,\ldots,j_q\}$ be such that $p+q=n$,
$1\le i_k\ne j_l\le n$, $1\le k\le p$, $1\le l\le q$.  Then,
\begin{equation}
  \underline{\|[x_1\cdots x_n]\|_F}=\mathop{\mathrm{hypot[f]}}(\underline{\|[x_{i_1}\cdots x_{i_p}]\|_F},\underline{\|[x_{j_1}\cdots x_{j_q}]\|_F}),
  \label{e:hr}
\end{equation}
what follows from~\eqref{e:hypot}.  In turn,
$\underline{\|[x_{i_1}\cdots x_{i_p}]\|_F}$ and
$\underline{\|[x_{j_1}\cdots x_{j_q}]\|_F}$ can be computed the same,
\emph{recursive} way, until $p$ and $q$ become one or two, when either
the absolute value of the only element is returned, or~\eqref{e:hypot}
is employed, respectively.  In the other direction, \eqref{e:hr} shows
that two partial norms, i.e., the norms of two disjoint subarrays, can
be combined into the norm of the whole array by taking the
$\mathrm{hypot[f]}$ of them.

In Section~\ref{s:2} the roundoff error accumulation in~\eqref{e:Lacc}
and~\eqref{e:hi} is analyzed and it is shown that both approaches
suffer from the similar numerical issues as $n$ grows.  This motivates
the introduction of the recursive scalar algorithms based
on~\eqref{e:hr}, that have substantially tighter relative error bounds
than those of the iterative algorithms, but are inevitably slower than
them.  To improve the performance, the recursive algorithm
$H$ is vectorized in Section~\ref{s:3} as $Z$, which, paired with $A$
for the final reduction, is the proposal for $\mathtt{xNRMF}$.
Another option for thread-based parallelization of the recursive
algorithms, apart from the OpenCilk~\cite{Schardl-Lee-23} one, briefly
described in the previous section, is also presented.
Section~\ref{s:4} shows how to compute the vector $p$-norm by
generalizing $\mathtt{xNRMF}$ to $\mathtt{xNRMP}$.  The numerical
testing in Section~\ref{s:5} confirms the benefits of using the widest
vector registers and relates the performance of $\mathtt{xNRMF}$ to
$L$, the Intel's $\mathtt{xNRM2}$ routine from the MKL
library\footnote{\url{https://www.intel.com/content/www/us/en/developer/tools/oneapi/onemkl.html}},
and the $\mathtt{reproBLAS\_xnrm2}$ from the
ReproBLAS~\cite{Ahrens-et-al-20}
library\footnote{\url{https://github.com/willow-ahrens/ReproBLAS}},
the latter two being the state-of-the-art approaches to the norm
computation.  Section~\ref{s:6} concludes the paper.

Alongside Table~1, the norm computation algorithms that are, to the
best of the author's knowledge, newly proposed here, can also be
summarized as in Table~2.

\begin{table}
  \begin{minipage}{\textwidth}\centering\tablecaptionfont
    Table~2: The recursive algorithms, classified according to the
    $\mathrm{hypot[f]}$ function used in them.
  \end{minipage}\\[1ex]
  \begin{tabular}{@{}rr@{}}\toprule
    $M_{\mathtt{S}}$ & $\mathrm{hypotf}$\\\midrule
    $A_{\mathtt{S}}$ & $\mathrm{cr\_hypotf}$\\
    $B_{\mathtt{S}}$ & $\mathrm{\_\_builtin\_hypotf}$\\
    $H_{\mathtt{S}}$ & $\mathrm{v1\_hypotf}$\\
    \bottomrule
  \end{tabular}\hfill
  \begin{tabular}{@{}rr@{}}\toprule
    $M_{\mathtt{D}}$ & $\mathrm{hypot}$\\\midrule
    $A_{\mathtt{D}}$ & $\mathrm{cr\_hypot}$\\
    $B_{\mathtt{D}}$ & $\mathrm{\_\_builtin\_hypot}$\\
    $H_{\mathtt{D}}$ & $\mathrm{v1\_hypot}$\\
    \bottomrule
  \end{tabular}\hfill
  \begin{tabular}{@{}rr@{}}\toprule
    $M_{\mathtt{S}}$ & $\mathrm{hypotf}$\\\midrule
    $X_{\mathtt{S}}$ & $\mathrm{v4\_hypotf}$\\
    $Y_{\mathtt{S}}$ & $\mathrm{v8\_hypotf}$\\
    $Z_{\mathtt{S}}$ & $\mathrm{v16\_hypotf}$\\
    \bottomrule
  \end{tabular}\hfill
  \begin{tabular}{@{}rr@{}}\toprule
    $M_{\mathtt{D}}$ & $\mathrm{hypot}$\\\midrule
    $X_{\mathtt{D}}$ & $\mathrm{v2\_hypot}$\\
    $Y_{\mathtt{D}}$ & $\mathrm{v4\_hypot}$\\
    $Z_{\mathtt{D}}$ & $\mathrm{v8\_hypot}$\\
    \bottomrule
  \end{tabular}
\end{table}
%
%
\section{Motivation for the recursive algorithms by a roundoff analysis}\label{s:2}
%
%
Under a simplifying assumption that only the $\mathtt{med}$
accumulator is used in $L_{\mathtt{x}}$, Theorem~\ref{thm:1} gives
bounds for the relative error in the obtained approximation
$\underline{\|\mathbf{x}\|_F}$.

\begin{theorem}\label{thm:1}
  Let $\mathbf{x}=[x_1\cdots x_n]$ be an array of finite values in the
  precision $\mathtt{x}$, and $\|\mathbf{x}\|_F$ its Frobenius norm.
  Denote the floating-point square root function by
  $\mathrm{sqrt[f]}$.  If an approximation of
  $\|\mathbf{x}\|_F=\sqrt{g_n}$ is computed as
  $\underline{\|\mathbf{x}\|_F}=\mathop\mathrm{sqrt[f]}(\underline{g_n})$,
  where
  \begin{displaymath}
    \underline{g_0^{}}=g_0^{}=0,\qquad
    g_i^{}=g_{i-1}^{}+x_i^2,\quad
    \underline{g_i^{}}=\mathop{\mathrm{fma[f]}}(x_i^{},x_i^{},\underline{g_{i-1}^{}}),\quad
    1\le i\le n,
  \end{displaymath}
  as in~\eqref{e:Lacc}, then, barring any overflow and inexact underflow, when
  $x_i\ne 0$ it holds
  \begin{equation}
    \underline{g_i^{}}=g_i^{}(1+\eta_i^{}),\quad
    1+\eta_i^{}=\left(1+\eta_{i-1}^{}\frac{g_{i-1}^{}}{g_i^{}}\right)(1+\eta_i'),\quad
    1\le i\le n,
    \label{e:11}
  \end{equation}
  where $|\eta_i'|\le\varepsilon_{\mathtt{x}}^{}$.  With
  $\epsilon_{\sqrt{}}^{}$ such that
  $|\epsilon_{\sqrt{}}^{}|\le\varepsilon_{\mathtt{x}}^{}$, it follows
  \begin{equation}
    \underline{\|\mathbf{x}\|_F^{}}=\mathop{\mathrm{sqrt[f]}}(\underline{g_n^{}})=\|\mathbf{x}\|_F^{}\sqrt{1+\eta_n^{}}(1+\epsilon_{\sqrt{}}^{}),
    \label{e:12}
  \end{equation}
  while the relative error factors from~\eqref{e:11} and~\eqref{e:12}
  can be bounded as
  \begin{equation}
    \begin{gathered}
      1+\eta_i^-=(1+\eta_{i-1}^-)(1-\varepsilon_{\mathtt{x}}^{})\le 1+\eta_i^{}\le(1+\eta_{i-1}^+)(1+\varepsilon_{\mathtt{x}}^{})=1+\eta_i^+,\\
      \sqrt{1+\eta_n^-}(1-\varepsilon_{\mathtt{x}}^{})\le\sqrt{1+\eta_n^{}}(1+\epsilon_{\sqrt{}}^{})\le\sqrt{1+\eta_n^+}(1+\varepsilon_{\mathtt{x}}^{}).
    \end{gathered}
    \label{e:13}
  \end{equation}
\end{theorem}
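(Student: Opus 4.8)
The plan is to establish \eqref{e:11}, then \eqref{e:12}, then the bounds \eqref{e:13}, the first two by direct substitution into the standard floating-point models for the fused multiply--add and the square root, and the last by an induction on $i$ that hinges on the contraction factor $g_{i-1}/g_i\in[0,1]$.

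First I would discard the trivial case in which every $x_i=0$, where $\|\mathbf{x}\|_F=0$, $\underline{g_n}=0$, and $\mathop{\mathrm{sqrt[f]}}(0)=0$, so that \eqref{e:12} and \eqref{e:13} hold with all error factors equal to $1$. Otherwise, whenever $x_i\ne 0$ one has $g_i=g_{i-1}+x_i^2>0$ because $g_{i-1}\ge 0$, so the multiplicative forms below are meaningful; moreover a step with $x_i=0$ satisfies $\mathop{\mathrm{fma[f]}}(0,0,\underline{g_{i-1}})=\underline{g_{i-1}}$ exactly and changes neither $g_i$ nor $\underline{g_i}$, so such steps may be ignored. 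Writing $\eta_i'$ for the relative rounding error of the $i$th FMA, i.e. using $\mathop{\mathrm{fma[f]}}(x,y,z)=(xy+z)(1+\epsilon_{\mathrm{f}})$ with $|\epsilon_{\mathrm{f}}|\le\varepsilon_{\mathtt{x}}$ (valid under the hypothesis of no overflow and no inexact underflow, with the exact real square $x_i^2$ furnished by the FMA's internal product before its single rounding), an induction on $i$ from $\underline{g_0}=g_0=0$ gives
\[
  \underline{g_i}=(x_i^2+\underline{g_{i-1}})(1+\eta_i')=\bigl(x_i^2+g_{i-1}(1+\eta_{i-1})\bigr)(1+\eta_i')=g_i\Bigl(1+\eta_{i-1}\frac{g_{i-1}}{g_i}\Bigr)(1+\eta_i'),
\]
which is precisely \eqref{e:11}. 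For \eqref{e:12} I would invoke the IEEE-754 correctly rounded square root, $\mathop{\mathrm{sqrt[f]}}(v)=\sqrt{v}\,(1+\epsilon_{\sqrt{}})$ with $|\epsilon_{\sqrt{}}|\le\varepsilon_{\mathtt{x}}$ for $v\ge 0$; since each FMA preserves nonnegativity (as $x_i^2+\underline{g_{i-1}}\ge 0$ and $1+\epsilon_{\mathrm{f}}>0$) we have $\underline{g_n}\ge 0$, hence $1+\eta_n=\underline{g_n}/g_n\ge 0$ and $\sqrt{1+\eta_n}$ is real, and substituting $\underline{g_n}=g_n(1+\eta_n)$ with $\sqrt{g_n}=\|\mathbf{x}\|_F$ yields \eqref{e:12}.

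For the bounds \eqref{e:13}, define $\eta_i^\pm$ by the recursions stated there with $\eta_0^\pm=0$, so that $1+\eta_i^+=(1+\varepsilon_{\mathtt{x}})^i\ge 1$ and $0<1+\eta_i^-=(1-\varepsilon_{\mathtt{x}})^i\le 1$; in particular $\eta_i^-\le 0\le\eta_i^+$, and both $1+\eta_i^\pm$ are strictly positive (using $\varepsilon_{\mathtt{x}}<1$). I would prove $\eta_i^-\le\eta_i\le\eta_i^+$ by induction on $i$. The key point is that $t_i:=g_{i-1}/g_i\in[0,1]$, so $\eta_{i-1}t_i$ lies between $0$ and $\eta_{i-1}$; combined with the inductive hypothesis $\eta_{i-1}^-\le\eta_{i-1}\le\eta_{i-1}^+$ and with $\eta_{i-1}^-\le 0\le\eta_{i-1}^+$, this gives $\eta_{i-1}^-\le\eta_{i-1}t_i\le\eta_{i-1}^+$, i.e. $0<1+\eta_{i-1}^-\le 1+\eta_{i-1}t_i\le 1+\eta_{i-1}^+$. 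Multiplying this chain by the positive chain $0<1-\varepsilon_{\mathtt{x}}\le 1+\eta_i'\le 1+\varepsilon_{\mathtt{x}}$ (legitimate because all factors are positive) produces $1+\eta_i^-\le 1+\eta_i\le 1+\eta_i^+$, the first line of \eqref{e:13}. The second line follows by applying the increasing function $\sqrt{\,\cdot\,}$ to the chain $0<1+\eta_n^-\le 1+\eta_n\le 1+\eta_n^+$ and then multiplying by $0<1-\varepsilon_{\mathtt{x}}\le 1+\epsilon_{\sqrt{}}\le 1+\varepsilon_{\mathtt{x}}$.

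The one genuine obstacle is the inductive step just described: one has to see that the already-accumulated factor $\eta_{i-1}$ is damped rather than magnified by the weight $g_{i-1}/g_i\le 1$ and therefore cannot escape the interval $[\eta_{i-1}^-,\eta_{i-1}^+]$, which already contains $0$. Everything else is routine bookkeeping, mostly verifying the positivity of each factor so that one-sided inequalities may be multiplied termwise, and handling the zero entries and the all-zero array as noted above.
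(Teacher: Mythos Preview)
Your proposal is correct and follows essentially the same route as the paper: derive~\eqref{e:11} by substituting the inductive hypothesis $\underline{g_{i-1}}=g_{i-1}(1+\eta_{i-1})$ into the FMA model and factoring out $g_i$, obtain~\eqref{e:12} from the square-root model, and deduce~\eqref{e:13} from $0\le g_{i-1}/g_i\le 1$. The paper compresses the last step to a single sentence (``follow from $0\le g_{i-1}\le g_i$''), whereas you spell out the induction, the sign observation $\eta_{i-1}^-\le 0\le\eta_{i-1}^+$, the positivity checks needed to multiply one-sided chains, and the handling of zero entries and the all-zero array; you also note the closed form $1+\eta_i^\pm=(1\pm\varepsilon_{\mathtt{x}})^i$, which the paper leaves implicit.
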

\begin{proof}
  For $i=1$ \eqref{e:11} holds trivially.  Assume that it holds for
  all $1\le j<i$.  Then,
  \begin{equation}
    \underline{g_{i-1}^{}}+x_i^2=g_{i-1}^{}(1+\eta_{i-1}^{})+x_i^2=(g_{i-1}^{}+x_i^2)(1+d),
    \label{e:14}
  \end{equation}
  where $d$ is found from the second equation in~\eqref{e:14} as
  \begin{displaymath}
    d=\eta_{i-1}^{}\frac{g_{i-1}^{}}{g_{i-1}^{}+x_i^2}=\eta_{i-1}^{}\frac{g_{i-1}^{}}{g_i^{}},
  \end{displaymath}
  so
  $\underline{g_i^{}}=(\underline{g_{i-1}^{}}+x_i^2)(1+\eta_i')=(g_{i-1}^{}+x_i^2)(1+d)(1+\eta_i')=g_i^{}(1+\eta_i^{})$,
  what proves~\eqref{e:11}, and consequently~\eqref{e:12}, with the
  factor $1+\eta_i^{}=(1+d)(1+\eta_i')$.  Its bounds in~\eqref{e:13},
  computable iteratively from $i=1$ to $n$, follow from
  $0\le g_{i-1}\le g_i$ in~\eqref{e:11}.
\end{proof}

If the same classification of the input elements by their magnitude is
used as in $L_{\mathtt{x}}$, and the associated partial norms,
\textsc{sml}, \textsc{med}, and \textsc{big}, are each accumulated as
in~\eqref{e:hi} with $\mathrm{hypot[f]}=\mathrm{cr\_hypot[f]}$, such
an iterative algorithm is called $C_{\mathtt{x}}$.  The separate
accumulators are employed not for the under/over-flow avoidance as in
$L_{\mathtt{x}}$, since unwarranted overflow cannot happen with
$\mathrm{cr\_hypot[f]}$ save for a possible sequence of unfavorable
upward roundings, but for accuracy, to collect the partial norms of
the smaller elements separately, each of which in isolation might
not otherwise affect the partial norm accumulated thus far, should it
become too large.  Finally, the accumulators' values are combined as
$\underline{\|\mathbf{x}\|_F}=\mathop{\mathrm{cr\_hypot[f]}}(\mathop{\mathrm{cr\_hypot[f]}}(\text{\textsc{sml}},\text{\textsc{med}}),\text{\textsc{big}})$,
due to~\eqref{e:hr}.  If only one accumulator is used (e.g.,
\textsc{med}), Theorem~\ref{thm:2} gives bounds for the relative error
in each partial norm and in $\underline{\|\mathbf{x}\|_F}$, computed
by $C_{\mathtt{x}}$ as in~\eqref{e:hi}.

\begin{theorem}\label{thm:2}
  Let $\mathbf{x}=[x_1\cdots x_n]$ be an array of finite values in the
  precision $\mathtt{x}$, and $\|\mathbf{x}\|_F$ its Frobenius norm.
  If its approximation is computed as
  $\underline{\|\mathbf{x}\|_F}=\underline{f_n}$, where
  \begin{displaymath}
    \underline{f_1^{}}=f_1^{}=|x_1^{}|,\qquad
    f_i^{}=\sqrt{f_{i-1}^2+x_i^2},\quad
    \underline{f_i^{}}=\mathop{\mathrm{hypot[f]}}(\underline{f_{i-1}^{}},x_i^{}),\quad
    2\le i\le n,
  \end{displaymath}
  as in~\eqref{e:hi}, then, barring any overflow and inexact
  underflow, when $x_i\ne 0$ it holds
  \begin{equation}
    \underline{f_i^{}}=f_i^{}(1+\epsilon_i^{}),\quad
    1+\epsilon_i^{}=\sqrt{1+\epsilon_{i-1}^{}(2+\epsilon_{i-1}^{})\frac{f_{i-1}^2}{f_i^2}}(1+\epsilon_i'),\quad
    1\le i\le n,
    \label{e:2}
  \end{equation}
  with
  $|\epsilon_i'|\le\varepsilon_{\mathtt{x}}'=c\varepsilon_{\mathtt{x}}^{}$,
  for some $c\ge 1$, defining additionally $\underline{f_0^{}}=f_0^{}=0$
  and $\epsilon_1'=0$.

  Assume that $\mathrm{hypot[f]}$ is $\mathrm{cr\_hypot[f]}$.  Then,
  $\varepsilon_{\mathtt{x}}'=\varepsilon_{\mathtt{x}}^{}$.  If
  $x_i=0$, then $\underline{f_i}=\underline{f_{i-1}}$.  If a lower
  bound of $\epsilon_{i-1}^{}$ is denoted by $\epsilon_{i-1}^-$ and an
  upper bound by $\epsilon_{i-1}^+$, with
  $\epsilon_1^-=\epsilon_1^+=0$, then, while
  $0\ge\epsilon_{i-1}^-\ge-1$, the relative error factor
  $1+\epsilon_i^{}$ from~\eqref{e:2} can be bounded as
  $1+\epsilon_i^-\le 1+\epsilon_i^{}\le 1+\epsilon_i^+$, where
  \begin{equation}
    1+\epsilon_i^-=\sqrt{1+\epsilon_{i-1}^-(2+\epsilon_{i-1}^-)}(1-\varepsilon_{\mathtt{x}}),\ \
    1+\epsilon_i^+=\sqrt{1+\epsilon_{i-1}^+(2+\epsilon_{i-1}^+)}(1+\varepsilon_{\mathtt{x}}).
    \label{e:3}
  \end{equation}
\end{theorem}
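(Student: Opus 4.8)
The plan is to mirror the structure of the proof of Theorem~\ref{thm:1}, proceeding by induction on $i$ but now carrying a square root through the recurrence. First I would establish the base case: for $i=1$ we have $\underline{f_1}=f_1=|x_1|$, so $\epsilon_1=0$, and with the convention $\epsilon_1'=0$ the formula~\eqref{e:2} reads $1+\epsilon_1=\sqrt{1+0}=1$, which is consistent. For the inductive step, assume~\eqref{e:2} holds for $i-1$, so $\underline{f_{i-1}}=f_{i-1}(1+\epsilon_{i-1})$. Then, squaring and using the definition~\eqref{e:hypot} of $\mathrm{hypot[f]}$,
\begin{displaymath}
  \underline{f_i}=\mathop{\mathrm{hypot[f]}}(\underline{f_{i-1}},x_i)=\sqrt{\underline{f_{i-1}}^2+x_i^2}\,(1+\epsilon_i')=\sqrt{f_{i-1}^2(1+\epsilon_{i-1})^2+x_i^2}\,(1+\epsilon_i').
\end{displaymath}
Writing $(1+\epsilon_{i-1})^2=1+\epsilon_{i-1}(2+\epsilon_{i-1})$ and factoring $f_i^2=f_{i-1}^2+x_i^2$ out of the radicand, exactly as $d$ was extracted in~\eqref{e:14}, gives the radicand $f_i^2\bigl(1+\epsilon_{i-1}(2+\epsilon_{i-1})f_{i-1}^2/f_i^2\bigr)$, hence $\underline{f_i}=f_i(1+\epsilon_i)$ with $1+\epsilon_i$ as in~\eqref{e:2}. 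The statement about $\varepsilon_{\mathtt{x}}'=\varepsilon_{\mathtt{x}}$ when $\mathrm{hypot[f]}=\mathrm{cr\_hypot[f]}$ is immediate from the definition of a correctly rounded function ($c=1$), and the case $x_i=0$ is trivial since then $\underline{f_i^2}=\underline{f_{i-1}^2}$ is already representable, so no rounding occurs and $\underline{f_i}=\underline{f_{i-1}}$.

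Next I would derive the bounds~\eqref{e:3}. The key monotonicity fact, analogous to $0\le g_{i-1}\le g_i$ in Theorem~\ref{thm:1}, is $0\le f_{i-1}^2/f_i^2\le 1$, so the ratio $f_{i-1}^2/f_i^2$ can be replaced by its worst-case value. The subtlety here, absent in Theorem~\ref{thm:1}, is that the coefficient $\epsilon_{i-1}(2+\epsilon_{i-1})$ multiplying this ratio need not be of one sign, and the function $t\mapsto \epsilon_{i-1}(2+\epsilon_{i-1})t$ is monotone in $t$ with slope $\epsilon_{i-1}(2+\epsilon_{i-1})$; combined with the monotonicity of $\sqrt{1+(\cdot)}$, the extreme values of $1+\epsilon_i$ over $t\in[0,1]$ are attained at $t=0$ or $t=1$. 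To get a clean recurrence one wants the extremes at $t=1$, which requires that pushing the ratio to $1$ indeed makes $1+\epsilon_i$ more extreme. This is where the hypothesis $0\ge\epsilon_{i-1}^-\ge -1$ enters: it guarantees that on the relevant range the quantity $1+\epsilon_{i-1}(2+\epsilon_{i-1})=(1+\epsilon_{i-1})^2\ge 0$ stays nonnegative (so the square root is defined) and that the lower bound is obtained by taking $\epsilon_{i-1}=\epsilon_{i-1}^-$ together with $t=1$, while the upper bound is obtained by taking $\epsilon_{i-1}=\epsilon_{i-1}^+$ with $t=1$. Plugging in and absorbing the outer $(1+\epsilon_i')$ factor, bounded by $(1\pm\varepsilon_{\mathtt{x}})$ in the correctly rounded case, yields precisely~\eqref{e:3}. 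The base case $\epsilon_1^-=\epsilon_1^+=0$ matches $\epsilon_1=0$.

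I expect the main obstacle to be a careful justification of the monotonicity argument for the bounds, specifically verifying that the worst case over the ratio $f_{i-1}^2/f_i^2\in[0,1]$ is $t=1$ for \emph{both} the lower and the upper bound simultaneously, and checking that the side condition $0\ge\epsilon_{i-1}^-\ge -1$ both (a) keeps all radicands nonnegative throughout the induction and (b) is itself inductively maintained (or at least is the exact hypothesis under which~\eqref{e:3} is asserted). One should note that $1+\epsilon_{i-1}(2+\epsilon_{i-1})=(1+\epsilon_{i-1})^2$, which is a convenient algebraic identity: it shows the radicand in~\eqref{e:2} is $1-(1-t)\bigl(1-(1+\epsilon_{i-1})^2\bigr)$ up to sign bookkeeping, making the dependence on $t$ manifestly affine and the sign analysis transparent. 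The remaining manipulations — peeling off $(1+\epsilon_i')$, translating $\sqrt{1+\eta}$ bounds into the displayed form — are routine and parallel to the end of the proof of Theorem~\ref{thm:1}.
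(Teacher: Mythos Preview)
Your proposal is correct and follows essentially the same approach as the paper: induction on $i$ using the definition of $\mathrm{hypot[f]}$, the factorization $(1+\epsilon_{i-1})^2=1+\epsilon_{i-1}(2+\epsilon_{i-1})$, and extraction of $f_i^2$ from the radicand, followed by the monotonicity of $x\mapsto x(2+x)$ on $x\ge -1$ together with $0\le f_{i-1}^2/f_i^2\le 1$ for the bounds. Your treatment of the bounds is actually more explicit than the paper's---the paper dispatches~\eqref{e:3} in a single sentence invoking those two monotonicity facts, whereas you spell out why the worst case for \emph{both} bounds occurs at $t=1$ and note the useful identity $1+\epsilon_{i-1}(2+\epsilon_{i-1})=(1+\epsilon_{i-1})^2$.
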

\begin{proof}
  For $i=1$, \eqref{e:2} holds trivially with $\epsilon_1'=0$.
  Assuming that~\eqref{e:2} holds for all $j$ such that $1\le j<i$,
  where $2\le i\le n$, and that $x_i\ne 0$, from~\eqref{e:hypot} it
  follows
  \begin{equation}
    \underline{f_i^{}}=\sqrt{\underline{f_{i-1}^2}+x_i^2}(1+\epsilon_i')=\sqrt{f_{i-1}^2(1+\epsilon_{i-1}^{})^2+x_i^2}(1+\epsilon_i').
    \label{e:4}
  \end{equation}
  If the term under the square root on the right hand side
  of~\eqref{e:4} is written as
  \begin{equation}
    f_{i-1}^2(1+\epsilon_{i-1}^{})^2+x_i^2=(f_{i-1}^2+x_i^2)(1+a),
    \label{e:5}
  \end{equation}
  then an easy algebraic manipulation gives
  \begin{equation}
    a=\epsilon_{i-1}^{}(2+\epsilon_{i-1}^{})\frac{f_{i-1}^2}{f_{i-1}^2+x_i^2}=\epsilon_{i-1}^{}(2+\epsilon_{i-1}^{})\frac{f_{i-1}^2}{f_i^2}.
    \label{e:6}
  \end{equation}
  Substituting~\eqref{e:5} into~\eqref{e:4} yields
  \begin{displaymath}
    \underline{f_i^{}}=\sqrt{f_{i-1}^2+x_i^2}\sqrt{1+a}(1+\epsilon_i')=f_i^{}\sqrt{1+a}(1+\epsilon_i')=f_i^{}(1+\epsilon_i^{}),
  \end{displaymath}
  where $(1+\epsilon_i^{})=\sqrt{1+a}(1+\epsilon_i')$, as claimed
  in~\eqref{e:2}.  The bounds~\eqref{e:3} on $1+\epsilon_i$ when
  $\mathrm{hypot[f]}$ is $\mathrm{cr\_hypot[f]}$ follow from the fact
  that the function $x\mapsto x(2+x)$ is  monotonically increasing for
  $x\ge-1$ (here, $x=\epsilon_{i-1}$), and from $0\le f_{i-1}\le f_i$.
\end{proof}

By evaluating~\eqref{e:13} and~\eqref{e:3} from $i=1$ to $n$, using
the MPFR library~\cite{Fousse-et-al-07} with $2048$ bits of precision,
such that, for each $i$, $\eta_i^-$ and $\eta_i^+$, or $\epsilon_i^-$
and $\epsilon_i^+$, respectively, are computed, it can be established
that, for $n$ large enough, the relative error bounds on
$C_{\mathtt{x}}$ are approximately twice larger in magnitude than the
ones on $L_{\mathtt{x}}$, where both algorithms are restricted to a
single accumulator.  Therefore, $C_{\mathtt{x}}$ is not considered for
$\mathtt{xNRMF}$.  However, \eqref{e:hr} is valid not only in the case
of splitting the input array of length $n$ into two subarrays of
lengths $p=n-1$ and $q=1$, as in~\eqref{e:hi}, but also when
$p\approx q$.  If $n=2^k$ for some $k\ge 2$, e.g., then taking $p=q$
in~\eqref{e:hr} reduces the initial norm computation problem to two
problems of half the input length each, and recursively so $k-1$
times.  If $n$ is odd, consider $p=q+1$ to keep $p\ge q$.

Let $R_{\mathtt{x}}$ denote a scalar recursive algorithm.  At every
recursion level except the last, $R_{\mathtt{x}}$ splits its input
array into two contiguous subarrays, the left one being by at most one
element longer, and not shorter, than the right one, calls itself on
both subarrays in turn, and combines their norms.  The splitting stops
when the length of the input array is at most two, when its norm is
calculated directly, as illustrated in~\eqref{e:r} for the initial
length $n=7$, i.e., $p=4$ and $q=3$.  The superscripts before the
operations show their completion order, with the bold ones indicating
the leaf operations that read the input elements from memory \emph{in
the array order}, thus exhibiting the same cache-friendly access
pattern as the iterative algorithms.

\begin{equation}
  \begin{gathered}
    \null^8\mathop{R_{\mathtt{x}}}([x_1\,x_2\,x_3\,x_4\,x_5\,x_6\,x_7]),\\[1ex]
    \null^7\mathop{\mathrm{hypot[f]}}(\mathop{R_{\mathtt{x}}}([x_1\,x_2\,x_3\,x_4]),\mathop{R_{\mathtt{x}}}([x_5\,x_6\,x_7])),\\[1ex]
    \null^3\mathop{\mathrm{hypot[f]}}(\mathop{R_{\mathtt{x}}}([x_1\,x_2]),\mathop{R_{\mathtt{x}}}([x_3\,x_4])),\qquad\null^6\mathop{\mathrm{hypot[f]}}(\mathop{R_{\mathtt{x}}}([x_5\,x_6]),\mathop{R_{\mathtt{x}}}([x_7])),\\[1ex]
    \null^{\mathbf{1}}\mathop{\mathrm{hypot[f]}}(x_1,x_2),\qquad\null^{\mathbf{2}}\mathop{\mathrm{hypot[f]}}(x_3,x_4),\qquad\qquad\null^{\mathbf{4}}\mathop{\mathrm{hypot[f]}}(x_5,x_6),\qquad\null^{\mathbf{5}}|x_7|.
  \end{gathered}
  \label{e:r}
\end{equation}

The relative error bounds for the recursive norm computation, as
in~\eqref{e:hr}, are given in Theorem~\ref{thm:3}.  The choice of
$\mathrm{hypot[f]}$ does not have to be the same with each invocation
(e.g., in~\eqref{e:r} the operation~7 might use a different
$\mathrm{hypot[f]}$ than the rest).

\begin{theorem}\label{thm:3}
  Assume that $\underline{f_{[p]}}=f_{[p]}(1+\epsilon_{[p]})$ and
  $\underline{f_{[q]}}=f_{[q]}(1+\epsilon_{[q]})$ approximate the
  Frobenius norms of some arrays of length $p\ge 1$ and $q\ge 1$,
  respectively, and let
  \begin{displaymath}
    f_{[n]}^2=\sqrt{f_{[p]}^2+f_{[q]}^2},\quad
    \underline{f_{[n]}^{}}=\mathop{\mathrm{hypot[f]}}(\underline{f_{[p]}^{}},\underline{f_{[q]}^{}}),
  \end{displaymath}
  where $\underline{f_{[n]}}$ approximates the Frobenius norm of the
  concatenation of length $n=p+q$ of those arrays, as in~\eqref{e:hr}.
  Then, barring any overflow and inexact underflow, with
  \begin{displaymath}
    1+\epsilon_{[l]}=\min\{1+\epsilon_{[p]},1+\epsilon_{[q]}\},\quad
    1+\epsilon_{[k]}=\max\{1+\epsilon_{[p]},1+\epsilon_{[q]}\},\quad
    1+\epsilon_/=\frac{1+\epsilon_{[l]}}{1+\epsilon_{[k]}},
  \end{displaymath}
  i.e., $l=p$ and $k=q$ or $l=q$ and $k=p$, for $\underline{f_{[n]}}$
  when $f_{[n]}>0$ it holds
  \begin{equation}
    \underline{f_{[n]}^{}}=f_{[n]^{}}(1+\epsilon_{[n]}^{}),\quad
    1+\epsilon_{[n]}^{}=\sqrt{1+\epsilon_/^{}(2+\epsilon_/^{})\frac{f_{[l]}^2}{f_{[n]}^2}}(1+\epsilon_{[k]}^{})(1+\epsilon_{[n]}'),
    \label{e:7}
  \end{equation}
  where
  $|\epsilon_{[n]}'|\le\varepsilon_{\mathtt{x}}'=c\varepsilon_{\mathtt{x}}^{}$,
  for some $c\ge 1$, with $c=1$ if $\mathrm{hypot[f]}$ is
  $\mathrm{cr\_hypot[f]}$.

  If
  $0\le 1+\epsilon_{[i]}^-\le 1+\epsilon_{[i]}^{}\le 1+\epsilon_{[i]}^+$
  for all $i$, $1\le i<n$, then, with
  \begin{equation}
    \begin{aligned}
      1+\epsilon_{[n]}^-&=\sqrt{1+\epsilon_/^-(2+\epsilon_/^-)}(1+\epsilon_{[k]}^-)(1-\varepsilon_{\mathtt{x}}'),\quad
      1+\epsilon_/^-=\frac{1+\epsilon_{[l]}^-}{1+\epsilon_{[k]}^+},\\
      1+\epsilon_{[n]}^+&=\sqrt{1+\epsilon_/^+(2+\epsilon_/^+)}(1+\epsilon_{[k]}^+)(1+\varepsilon_{\mathtt{x}}'),\quad
      1+\epsilon_/^+=\frac{1+\epsilon_{[l]}^+}{1+\epsilon_{[k]}^-},
    \end{aligned}
    \label{e:8}
  \end{equation}
  the relative error in~\eqref{e:7} can be bounded as
  $1+\epsilon_{[n]}^-\le 1+\epsilon_{[n]}^{}\le 1+\epsilon_{[n]}^+$.
\end{theorem}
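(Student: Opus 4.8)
The plan is to follow the proof of Theorem~\ref{thm:2} almost verbatim, with the two partial norms $\underline{f_{[p]}}$ and $\underline{f_{[q]}}$ taking over the roles of $\underline{f_{i-1}}$ and $x_i$; the one genuinely new device is the relabelling of the two incoming relative-error factors by $\min$ and $\max$, which I will motivate below.

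First I would establish~\eqref{e:7}. By the definition~\eqref{e:hypot} of $\mathrm{hypot[f]}$, $\underline{f_{[n]}}=\sqrt{\underline{f_{[p]}^2}+\underline{f_{[q]}^2}}\,(1+\epsilon_{[n]}')$ with $|\epsilon_{[n]}'|\le\varepsilon_{\mathtt{x}}'=c\varepsilon_{\mathtt{x}}$, $c=1$ for $\mathrm{cr\_hypot[f]}$. Substituting $\underline{f_{[p]}}=f_{[p]}(1+\epsilon_{[p]})$ and $\underline{f_{[q]}}=f_{[q]}(1+\epsilon_{[q]})$ turns the radicand into $f_{[l]}^2(1+\epsilon_{[l]})^2+f_{[k]}^2(1+\epsilon_{[k]})^2$, where $k$ is whichever of $p,q$ carries the larger of the two factors $1+\epsilon_{[p]},1+\epsilon_{[q]}$ and $l$ is the other (so $f_{[l]}$ is the partial norm paired with $\epsilon_{[l]}$), and $f_{[n]}^2=f_{[l]}^2+f_{[k]}^2$. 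Since $1+\epsilon_{[k]}\ge 0$, I would factor $(1+\epsilon_{[k]})^2$ out of the radicand to get $\underline{f_{[n]}}=(1+\epsilon_{[k]})\sqrt{f_{[l]}^2(1+\epsilon_/)^2+f_{[k]}^2}\,(1+\epsilon_{[n]}')$ with $1+\epsilon_/=(1+\epsilon_{[l]})/(1+\epsilon_{[k]})\in[0,1]$. Then, exactly as in~\eqref{e:5} and~\eqref{e:6}, writing $f_{[l]}^2(1+\epsilon_/)^2+f_{[k]}^2=(f_{[l]}^2+f_{[k]}^2)(1+a)=f_{[n]}^2(1+a)$ and solving gives $a=\epsilon_/(2+\epsilon_/)f_{[l]}^2/f_{[n]}^2$, and substituting back, using $f_{[n]}>0$, yields~\eqref{e:7}.

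For the bounds~\eqref{e:8} I would mimic the last paragraph of the proof of Theorem~\ref{thm:2}. Two elementary facts suffice: $0\le f_{[l]}^2/f_{[n]}^2\le 1$ because $f_{[n]}^2=f_{[l]}^2+f_{[k]}^2$, and $x\mapsto x(2+x)=(1+x)^2-1$ is monotonically increasing for $x\ge-1$. Having pulled out the \emph{larger} factor puts $\epsilon_/$ into $[-1,0]$, where $\epsilon_/(2+\epsilon_/)\le 0$, so $a\in[\epsilon_/(2+\epsilon_/),0]$ and hence $(1+\epsilon_/)^2\le 1+a\le 1$. Dividing an extreme numerator by the opposite extreme denominator bounds the ratio, $1+\epsilon_/^-=(1+\epsilon_{[l]}^-)/(1+\epsilon_{[k]}^+)\le 1+\epsilon_/\le(1+\epsilon_{[l]}^+)/(1+\epsilon_{[k]}^-)=1+\epsilon_/^+$ (all quantities nonnegative, $1+\epsilon_{[k]}^->0$); monotonicity of $x(2+x)$ then gives $1+a\ge(1+\epsilon_/^-)^2=1+\epsilon_/^-(2+\epsilon_/^-)\ge 0$ and, via $1+a\le 1\le 1+\epsilon_/^+(2+\epsilon_/^+)$ in the regime $\epsilon_/^+\ge 0$ that occurs in the recursion, $\sqrt{1+a}$ is sandwiched between the two square roots of~\eqref{e:8}. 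Multiplying through by the nonnegative bounds $1+\epsilon_{[k]}^-\le 1+\epsilon_{[k]}\le 1+\epsilon_{[k]}^+$ and $1-\varepsilon_{\mathtt{x}}'\le 1+\epsilon_{[n]}'\le 1+\varepsilon_{\mathtt{x}}'$ then produces~\eqref{e:8}.

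The step I expect to require the most care is the $\min$/$\max$ relabelling and the normalization by the larger factor. If $\epsilon_{[p]}$ and $\epsilon_{[q]}$ were kept symmetric, the expression $f_{[p]}^2(1+\epsilon_{[p]})^2+f_{[q]}^2(1+\epsilon_{[q]})^2$ would not collapse into a single $f_{[n]}^2(1+a)$ with $a$ governed by one monotone function of one variable; factoring out the \emph{maximum} of the two factors is precisely what forces $0\le 1+\epsilon_/\le 1$, i.e., $\epsilon_/\in[-1,0]$, which is simultaneously the regime where $x(2+x)$ is monotone and nonpositive and the reason $\sqrt{1+a}\le 1$, both of which are needed for the clean, non-accumulating bounds~\eqref{e:8}. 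The harmless degenerate cases ($1+\epsilon_{[k]}=0$, which forces $\underline{f_{[p]}}=\underline{f_{[q]}}=0$, or $f_{[l]}=0$) I would dispatch in one sentence, noting also that the bookkeeping of which incoming interval becomes $l$ and which becomes $k$ is settled once this normalization is fixed.
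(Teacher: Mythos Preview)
Your proposal is correct and follows essentially the same route as the paper's own proof: factor $(1+\epsilon_{[k]})^2$ out of the radicand, rewrite the remainder as $(f_{[l]}^2+f_{[k]}^2)(1+b)$ with $b=\epsilon_/(2+\epsilon_/)f_{[l]}^2/f_{[n]}^2$, and then appeal to $f_{[l]}\le f_{[n]}$ together with the monotonicity of $x\mapsto x(2+x)$ on $[-1,\infty)$ to obtain~\eqref{e:8}. Your write-up is in fact more explicit than the paper's on the bound step---in particular your observation that the $\min$/$\max$ relabelling forces $\epsilon_/\in[-1,0]$ and that the upper bound relies on $\epsilon_/^+\ge 0$ in the recursion are details the paper leaves implicit.
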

\begin{proof}
  Expanding $\underline{f_{[p]}^2}+\underline{f_{[q]}^2}$ gives
  \begin{equation}
    \underline{f_{[p]}^2}+\underline{f_{[q]}^2}=f_{[p]}^2(1+\epsilon_{[p]}^{})^2+f_{[q]}^2(1+\epsilon_{[q]}^{})^2=(f_{[l]}^2(1+\epsilon_/^{})^2+f_{[k]}^2)(1+\epsilon_{[k]}^{})^2.
    \label{e:9}
  \end{equation}
  Similarly to~\eqref{e:5}, expressing the first factor on the right hand
  side of~\eqref{e:9} as
  \begin{equation}
    f_{[l]}^2(1+\epsilon_/^{})^2+f_{[k]}^2=(f_{[l]}^2+f_{[k]}^2)(1+b)
    \label{e:10}
  \end{equation}
  leads to
  \begin{displaymath}
    b=\epsilon_/(2+\epsilon_/)\frac{f_{[l]}^2}{f_{[l]}^2+f_{[k]}^2}=\epsilon_/^{}(2+\epsilon_/^{})\frac{f_{[l]}^2}{f_{[n]}^2},
  \end{displaymath}
  and therefore, by substituting~\eqref{e:10} into~\eqref{e:9},
  \begin{displaymath}
    \underline{f_{[n]}^{}}=\sqrt{\underline{f_{[p]}^2}+\underline{f_{[q]}^2}}(1+\epsilon_{[n]}')=\sqrt{f_{[p]}^2+f_{[q]}^2}\sqrt{1+b}(1+\epsilon_{[k]}^{})(1+\epsilon_{[n]}'),
  \end{displaymath}
  what is equivalent to~\eqref{e:7}, while~\eqref{e:8} follows from
  $f_{[l]}\le f_{[n]}$ and, as in the proof of Theorem~\ref{thm:2},
  from the fact that the function $x\mapsto x(2+x)$ is monotonically
  increasing for $x\ge-1$.  With $p$ and $q$ (and thus $n$) given,
  \eqref{e:8} can be computed recursively.
\end{proof}

Listing~1 formalizes the $R_{\mathtt{D}}$ class of algorithms
($R_{\mathtt{S}}$ requires the substitutions
$\text{\texttt{double}}\mapsto\text{\texttt{float}}$, 
$\mathrm{fabs}\mapsto\mathrm{fabsf}$, and
$\mathrm{hypot}\mapsto\mathrm{hypotf}$).  The algorithm
$A_{\mathtt{x}}$ is obtained in the case of
$\mathrm{hypot[f]}=\mathrm{cr\_hypot[f]}$, the algorithm
$B_{\mathtt{x}}$ with $\mathrm{\_\_builtin\_hypot[f]}$, and the
algorithm $H_{\mathtt{x}}$ with $\mathrm{v1\_hypot[f]}$, formalized in
Listing~2 following~\cite[Eq.~(2.13)]{Novakovic-23} for
$\mathtt{x}=\mathtt{D}$ (see also the SLEEF's\footnote{Build the code
from \url{https://github.com/shibatch/sleef} and look into
\texttt{sleefinline\_avx512f.h}.} routine
$\mathtt{Sleef\_hypotd8\_u35avx512f}$).  Note that
$\mathrm{v1\_hypot[f]}$ requires no branching and each of its
statements corresponds to a single arithmetic instruction.  It can be
shown~\cite[Lemma~2.1]{Novakovic-23} that for its relative error
factor $1+\epsilon_{\mathtt{x}}'$, in the notation of
Theorem~\ref{thm:3}, holds
$1+\epsilon_{\mathtt{x}}^{\prime-}<1+\epsilon_{\mathtt{x}}'<1+\epsilon_{\mathtt{x}}^{\prime+}$,
where
\begin{equation}
  1+\epsilon_{\mathtt{x}}^{\prime-}=(1-\varepsilon_{\mathtt{x}}^{})^{\frac{5}{2}}\sqrt{1-\frac{\varepsilon_{\mathtt{x}}^{}(2-\varepsilon_{\mathtt{x}}^{})}{2}},\quad
  1+\epsilon_{\mathtt{x}}^{\prime+}=(1+\varepsilon_{\mathtt{x}}^{})^{\frac{5}{2}}\sqrt{1+\frac{\varepsilon_{\mathtt{x}}^{}(2+\varepsilon_{\mathtt{x}}^{})}{2}}.
  \label{e:reH}
\end{equation}

\begin{table}
\begin{minipage}{\textwidth}\centering\tablecaptionfont
  Listing~1: The $R_{\mathtt{D}}$ class of algorithms in OpenCilk C\@.
\end{minipage}
\begin{lstlisting}
double $R_{\mathtt{D}}$(const $\hbox{\rm\sc integer}$ *const n, const double *const x) { // assume $\text{*n}>0$
  if (*n == 1) return __builtin_fabs(*x); // $|\text{x[0]}|$
  if (*n == 2) return hypot(x[0], x[1]); // one of the described $\mathrm{hypot}$ functions
  const $\hbox{\rm\sc integer}$ p = ((*n >> 1) + (*n & 1)); // $p=\left\lceil n/2\right\rceil\ge 2$
  const $\hbox{\rm\sc integer}$ q = (*n - p); // $q=n-p\le p$
  double fp, fq; // $f_{[p]}$ and $f_{[q]}$
  $\hbox{\rm\sc cilk\_scope}$ { // $\hbox{\rm\sc cilk\_scope}$ is $\text{cilk\_scope}$ if OpenCilk is used, ignored otherwise
    fp = $\!\hbox{\rm\sc cilk\_spawn}$ $R_{\mathtt{D}}$(&p, x); // call $R_{\mathtt{D}}$ recursively on $\mathbf{x}_p=[x_1\cdots x_p]$
    fq = $\!R_{\mathtt{D}}$(&q, (x + p)); // call $R_{\mathtt{D}}$ recursively on $\mathbf{x}_q=[x_{p+1}\cdots x_n]$
  } // $\hbox{\rm\sc cilk\_spawn}$ is $\text{cilk\_spawn}$ if OpenCilk is used, ignored otherwise
  return hypot(fp, fq); // having computed $f_{[p]}^{}$ and $f_{[q]}^{}$, return $f_{[n]}^{}\approx\sqrt{f_{[p]}^2+f_{[q]}^2}$
} // $\hbox{\rm\sc integer}$ corresponds to the Fortran $\text{INTEGER}$ type (e.g., $\text{int}$)
\end{lstlisting}
\end{table}

Listing~1 also shows how to optionally parallelize the scalar
recursive algorithms using\footnote{As described on
\url{https://www.opencilk.org}, OpenCilk is only offered with a
modified Clang C/C++ compiler.  Most of the testing here was thus
performed without OpenCilk, using \texttt{gcc}.} the task parallelism
of OpenCilk.  A function invocation with \texttt{cilk\_spawn}
indicates that the function may, but does not have to, be executed
concurrently with the rest of the code in the same
\texttt{cilk\_scope}.  A scope cannot be exited until all computations
spawned within it have completed, i.e., all their results are
available.

\begin{table}
\begin{minipage}{\textwidth}\centering\tablecaptionfont
  Listing~2: The $\mathrm{v1\_hypot}$ operation in C\@.
\end{minipage}
\begin{lstlisting}
static inline double v1_hypot(const double x, const double y) {
  const double X = __builtin_fabs(x); // $\text{X}=|\text{x}|$
  const double Y = __builtin_fabs(y); // $\text{Y}=|\text{y}|$
  const double m = __builtin_fmin(X, Y); // $\text{m}=\min\{\text{X},\text{Y}\}$
  const double M = __builtin_fmax(X, Y); // $\text{M}=\max\{\text{X},\text{Y}\}$
  const double q = (m / M); // might be a $\text{NaN}$ if, e.g., $\text{m}=\text{M}=0$, but$\ldots$
  const double Q = __builtin_fmax(q, 0.0); // $\ldots\text{Q}$ should not be a $\text{NaN}$
  const double S = __builtin_fma(Q, Q, 1.0); // $\text{S}=\mathop{\mathrm{fma}}(\text{Q},\text{Q},1.0)$
  const double s = __builtin_sqrt(S); // $\text{s}=\mathop{\mathrm{sqrt}}(\text{S})$
  return (M * s); // $\text{M}\sqrt{1+(\text{m}/\text{M})^2}\approx\sqrt{\text{x}^2+\text{y}^2}$
} // if one argument of $\mathrm{fmin}$ or $\mathrm{fmax}$ is a $\text{NaN}$, the other argument is returned
\end{lstlisting}
\end{table}

\looseness=-1
Evaluating~\eqref{e:13} and~\eqref{e:8}, the latter by recursively
computing $\epsilon_{[i]}^-$ and $\epsilon_{[i]}^+$, shows that the
lower bounds on the algorithms' relative errors,
$\mathop{\mathrm{lb\,relerr}}[M_{\mathtt{x}}]$,
\begin{displaymath}
  \mathop{\mathrm{lb\,relerr}}[L_{\mathtt{x}}^{}]=\left(\sqrt{1+\eta_n^-}(1-\varepsilon_{\mathtt{x}}^{})-1\right)/\varepsilon_{\mathtt{x}}^{},\quad
  \mathop{\mathrm{lb\,relerr}}[R_{\mathtt{x}}^{}]=\epsilon_{[n]}^-/\varepsilon_{\mathtt{x}}^{},
\end{displaymath}
are slightly smaller by magnitude than the upper bounds,
$\mathop{\mathrm{ub\,relerr}}[M_{\mathtt{x}}]$,
\begin{equation}
  \mathop{\mathrm{ub\,relerr}}[L_{\mathtt{x}}^{}]=\left(\sqrt{1+\eta_n^+}(1+\varepsilon_{\mathtt{x}}^{})-1\right)/\varepsilon_{\mathtt{x}}^{},\quad
  \mathop{\mathrm{ub\,relerr}}[R_{\mathtt{x}}^{}]=\epsilon_{[n]}^+/\varepsilon_{\mathtt{x}}^{},
  \label{e:ubr}
\end{equation}
and thus it suffices to present only the latter.  The bounds on the
relative error of the underlying $\mathrm{hypot[f]}$ cause
$\epsilon_{[n]}^+$ to be greater for $H$ than for $A$, due
to~\eqref{e:reH}.  Since the bounds on
$\mathrm{\_\_builtin\_hypot[f]}$ depend on the compiler and its math
library (here, the GNU's \texttt{gcc} and \texttt{glibc} were used,
respectively), $B_{\mathtt{x}}$ is excluded from this analysis, but
the math libraries might eventually adopt the correctly rounded
$\mathrm{hypot[f]}$ implementations if their performance is
acceptable, and thus $A$ and $B$ will be the same.

Table~3 shows $\mathop{\mathrm{ub\,relerr}}[M_{\mathtt{D}}]$
from~\eqref{e:ubr} for $M\in\{L,A,H\}$ and $n=2^k$, where
$1\le k\le 30$.  It is evident that the growth in the relative error
bound is \emph{linear} in $n$ for $L_{\mathtt{D}}$ and
\emph{\textbf{logarithmic}} for $A_{\mathtt{D}}$ and $H_{\mathtt{D}}$.
The introduction of the scalar recursive algorithms is thus justified,
even though a quick analysis of Listing~1 can prove they have to be
slower than $L_{\mathtt{x}}$ due to the recursion overhead and a much
higher complexity of $\mathrm{hypot[f]}$, however implemented,
compared to the hardware's $\mathrm{fma[f]}$.

\begin{table}
  \begin{minipage}{\textwidth}\centering\tablecaptionfont
    Table~3: Upper bounds~\eqref{e:ubr} on the relative errors for
    $L_{\mathtt{D}}$, $A_{\mathtt{D}}$, and $H_{\mathtt{D}}$, with
    respect to $n$.
  \end{minipage}\\[1ex]
  \centering\tablefont
  \begin{tabular}{@{}rccc@{}}\toprule
    $\lg n$ & $\mathop{\mathrm{ub\,relerr}}[L_{\mathtt{D}}]$ & $\mathop{\mathrm{ub\,relerr}}[A_{\mathtt{D}}]$ & $\mathop{\mathrm{ub\,relerr}}[H_{\mathtt{D}}]$\\\midrule
     $1$ & $1.50000000000000004\cdot 10^0$ & $1.00000000000000000\cdot 10^0$ & $3.00000000000000036\cdot 10^0$\\
     $2$ & $2.50000000000000021\cdot 10^0$ & $2.00000000000000011\cdot 10^0$ & $6.00000000000000172\cdot 10^0$\\
     $3$ & $4.50000000000000087\cdot 10^0$ & $3.00000000000000033\cdot 10^0$ & $9.00000000000000408\cdot 10^0$\\
     $4$ & $8.50000000000000354\cdot 10^0$ & $4.00000000000000067\cdot 10^0$ & $1.20000000000000074\cdot 10^1$\\
     $5$ & $1.65000000000000142\cdot 10^1$ & $5.00000000000000111\cdot 10^0$ & $1.50000000000000118\cdot 10^1$\\
     $6$ & $3.25000000000000568\cdot 10^1$ & $6.00000000000000167\cdot 10^0$ & $1.80000000000000172\cdot 10^1$\\
     $7$ & $6.45000000000002274\cdot 10^1$ & $7.00000000000000233\cdot 10^0$ & $2.10000000000000235\cdot 10^1$\\
     $8$ & $1.28500000000000909\cdot 10^2$ & $8.00000000000000311\cdot 10^0$ & $2.40000000000000309\cdot 10^1$\\
     $9$ & $2.56500000000003638\cdot 10^2$ & $9.00000000000000400\cdot 10^0$ & $2.70000000000000392\cdot 10^1$\\
    $10$ & $5.12500000000014552\cdot 10^2$ & $1.00000000000000050\cdot 10^1$ & $3.00000000000000486\cdot 10^1$\\
    $11$ & $1.02450000000005821\cdot 10^3$ & $1.10000000000000061\cdot 10^1$ & $3.30000000000000589\cdot 10^1$\\
    $12$ & $2.04850000000023283\cdot 10^3$ & $1.20000000000000073\cdot 10^1$ & $3.60000000000000703\cdot 10^1$\\
    $13$ & $4.09650000000093132\cdot 10^3$ & $1.30000000000000087\cdot 10^1$ & $3.90000000000000826\cdot 10^1$\\
    $14$ & $8.19250000000372529\cdot 10^3$ & $1.40000000000000101\cdot 10^1$ & $4.20000000000000960\cdot 10^1$\\
    $15$ & $1.63845000000149012\cdot 10^4$ & $1.50000000000000117\cdot 10^1$ & $4.50000000000001103\cdot 10^1$\\
    $16$ & $3.27685000000596046\cdot 10^4$ & $1.60000000000000133\cdot 10^1$ & $4.80000000000001257\cdot 10^1$\\
    $17$ & $6.55365000002384186\cdot 10^4$ & $1.70000000000000151\cdot 10^1$ & $5.10000000000001420\cdot 10^1$\\
    $18$ & $1.31072500000953674\cdot 10^5$ & $1.80000000000000170\cdot 10^1$ & $5.40000000000001594\cdot 10^1$\\
    $19$ & $2.62144500003814697\cdot 10^5$ & $1.90000000000000190\cdot 10^1$ & $5.70000000000001777\cdot 10^1$\\
    $20$ & $5.24288500015258789\cdot 10^5$ & $2.00000000000000211\cdot 10^1$ & $6.00000000000001971\cdot 10^1$\\
    $21$ & $1.04857650006103516\cdot 10^6$ & $2.10000000000000233\cdot 10^1$ & $6.30000000000002174\cdot 10^1$\\
    $22$ & $2.09715250024414063\cdot 10^6$ & $2.20000000000000256\cdot 10^1$ & $6.60000000000002388\cdot 10^1$\\
    $23$ & $4.19430450097656250\cdot 10^6$ & $2.30000000000000281\cdot 10^1$ & $6.90000000000002611\cdot 10^1$\\
    $24$ & $8.38860850390625000\cdot 10^6$ & $2.40000000000000306\cdot 10^1$ & $7.20000000000002844\cdot 10^1$\\
    $25$ & $1.67772165156250000\cdot 10^7$ & $2.50000000000000333\cdot 10^1$ & $7.50000000000003088\cdot 10^1$\\
    $26$ & $3.35544325625000001\cdot 10^7$ & $2.60000000000000361\cdot 10^1$ & $7.80000000000003341\cdot 10^1$\\
    $27$ & $6.71088647500000006\cdot 10^7$ & $2.70000000000000390\cdot 10^1$ & $8.10000000000003605\cdot 10^1$\\
    $28$ & $1.34217729500000005\cdot 10^8$ & $2.80000000000000420\cdot 10^1$ & $8.40000000000003878\cdot 10^1$\\
    $29$ & $2.68435460500000040\cdot 10^8$ & $2.90000000000000451\cdot 10^1$ & $8.70000000000004161\cdot 10^1$\\
    $30$ & $5.36870928500000318\cdot 10^8$ & $3.00000000000000483\cdot 10^1$ & $9.00000000000004455\cdot 10^1$\\
    \bottomrule
  \end{tabular}
\end{table}

\looseness=-1
The single precision error bounds are less informative, as explained
with Figure~\ref{fig:1}.  The tester $T$ is parameterized by $t$,
$\mathtt{x}$, and $\mathcal{D}$, where $t$ is the run number,
$1\le t\le 31$, $\mathtt{x}$ is the chosen precision, and
$\mathcal{D}\in\{\mathop{\mathcal{U}}(0,1),\mathop{\mathcal{N}}(0,1)\}$
is either the uniform or the normal random distribution.  Given $t$
and $\mathcal{D}$, the randomly generated but stored seed
$s_t^{\mathcal{D}}$ is retrieved, and an input array $\mathbf{x}$,
aligned to the cache line size, of $n=2^{29}$ pseudorandom numbers in
the precision $\mathtt{x}$, is generated, what can be done by the
$\mathtt{xLARND}$ routine from LAPACK~\cite{Anderson-et-al-99} with
the arguments $\mathtt{IDIST}=1$ and $\mathtt{IDIST}=3$ for
$\mathop{\mathcal{U}}(0,1)$ and $\mathop{\mathcal{N}}(0,1)$,
respectively, and with the initial $\mathtt{ISEED}=s_t^{\mathcal{D}}$.
Generating the inputs with the relatively small magnitudes of their
elements makes it possible to test the algorithms with large
values\footnote{Up to $n=2^{30}$ has been tried, to meaningfully check
for accuracy and obtain stable timing results.} of $n$ without
necessitating the results' overflow.

The ``exact'' (i.e., representable in $\mathtt{x}$ and as close to
exact as feasible) Frobenius norm $\|\mathbf{x}\|_F'$ is computed
recursively, following $R_{\mathtt{x}}$, but using MPFR with $2048$
bits of precision, and rounding the result to the nearest value
representable in $\mathtt{x}$.  Then, $T$ runs all algorithms under
consideration on $\mathbf{x}$, timing their execution and computing
their relative error with respect to $\|\mathbf{x}\|_F'$.  The
relative error (in multiples of $\varepsilon_{\mathtt{x}}$) of an
algorithm $M_{\mathtt{x}}$ on $\mathbf{x}$ is defined as
\begin{equation}
  \mathop{\mathrm{relerr}[M_{\mathtt{x}}]}(\mathbf{x})=\frac{|\|\mathbf{x}\|_F'-\underline{\|\mathbf{x}\|_F^{}}|}{\|\mathbf{x}\|_F'\cdot\varepsilon_{\mathtt{x}}^{}},\quad
  \underline{\|\mathbf{x}\|_F^{}}=\mathop{M_{\mathtt{x}}^{}}(\mathbf{x}),
  \label{e:relerr}
\end{equation}
where the division by $\varepsilon_{\mathtt{x}}^{}$ makes the relative
errors comparable across both precisions.

\begin{figure}
  \centering
  \includegraphics[keepaspectratio]{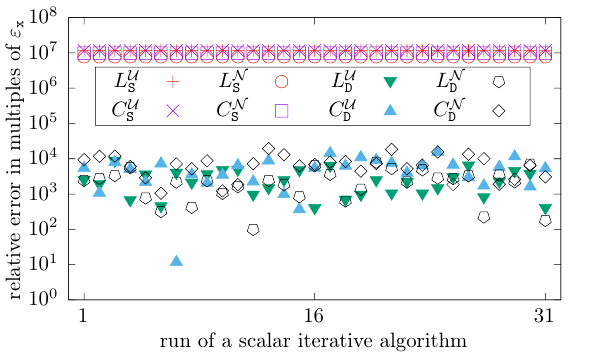}
  \caption{The observed relative errors~\eqref{e:relerr} for $L$ and $C$ in both precisions.}
  \label{fig:1}
\end{figure}

Three important conclusions follow from Figure~\ref{fig:1}.  First, in
single precision, both iterative algorithms can more easily reach a
point where a particular accumulator gets ``saturated'', i.e., so big
that no further update can change its value, regardless of whether it
accumulates the partial norm ($C_{\mathtt{S}}$) or the sum of squares
($L_{\mathtt{S}}$).  Once that happens, the rest of the input elements
of that accumulator's class is effectively ignored.  Second,
$L_{\mathtt{D}}$ and $C_{\mathtt{D}}$ are of comparable but poor
accuracy in the majority of the runs.  Third, the peak relative error
in double precision is about the square root of the upper bound from
Table~3.  But the most important conclusion is not visible in
Figure~\ref{fig:1}.  All scalar and vectorized recursive algorithms,
in both precisions, on the respective inputs have the relative
error~\eqref{e:relerr} less than \emph{three}.  Since the input
elements' magnitudes do not vary widely, at every node of the
recursion tree (see~\eqref{e:r}), the values being returned by its
left and the right branch are not so different that one would not
generally affect the other when combined by $\mathrm{hypot[f]}$.
%
%
\section{Vectorization of the recursive algorithms}\label{s:3}
%
%
It remains to improve the performance of the recursive algorithms,
what can hardly be done without vectorization.  Even though their
structure allows for a thread-based parallelization, such that several
independent recursion subtrees are computed each in their own thread,
the thread management overhead might be too large for any gain in
performance.  For extremely large $n$ a thread-based parallelization
will help, but even then, single-threaded vectorized subrecursions
should run faster than (but with a similar accuracy as) sequential
scalar ones, as demonstrated in the following.

Listing~3 is an implementation of~\eqref{e:vh} for
$\mathtt{x}=\mathtt{D}$ and $\mathfrak{p}=8$, similar
to~\cite[Algorithm~2.1]{Novakovic-23}.  It directly corresponds to
Listing~1 since the $\mathrm{v1\_hypot}$ operation is performed
simultaneously for all $\ell$.  The lines~8 and~9 clear the sign bits
of $\mathsf{x}_{\ell}$ and $\mathsf{y}_{\ell}$, respectively, while
the other operations are the vector variants of the standard C scalar
arithmetic, as provided\footnote{See
\url{https://www.intel.com/content/www/us/en/docs/intrinsics-guide/index.html}.}
by the compiler's intrinsic functions.  It is straightforward to adapt
$\mathrm{v8\_hypot}$ to another $\mathfrak{p}$ and/or $\mathtt{x}$,
and to the other platforms' vector instruction sets.  All arithmetic
is done in vector registers, without branching.

\begin{table}
\begin{minipage}{\textwidth}\centering\tablecaptionfont
  Listing~3: The $\mathrm{v8\_hypot}$ operation in C with AVX-512F.
\end{minipage}
\begin{lstlisting}
#ifndef __AVX512DQ__ // if only AVX512F is available$\ldots$
#define $\hbox{\rm\tt\_mm512\_andnot\_pd}$($\mathsf{b}$, $\mathsf{a}$) _mm512_castsi512_pd(\
  _mm512_andnot_epi64(_mm512_castpd_si512($\mathsf{b}$), _mm512_castpd_si512($\mathsf{a}$)))
#endif // $\ldots$define the $\hbox{\rm\tt\_mm512\_andnot\_pd}$ operation
static inline __m512d v8_hypot($\hbox{\rm\sc reg}$ __m512d $\mathsf{x}$, $\hbox{\rm\sc reg}$ __m512d $\mathsf{y}$) {
  $\hbox{\rm\sc reg}$ __m512d $\mathsf{z}$ = _mm512_set1_pd(-0.0); // $\mathsf{z}_{\ell}=-0.0$
  $\hbox{\rm\sc reg}$ __m512d $\mathsf{o}$ = _mm512_set1_pd(1.0); // $\mathsf{o}_{\ell}=1.0$
  $\hbox{\rm\sc reg}$ __m512d $\mathsf{X}$ = _mm512_andnot_pd($\mathsf{z}$, $\mathsf{x}$); // $\mathsf{X}_{\ell}=\mathsf{x}_{\ell}\mathop{\mathrm{bitand}}(\mathop{\mathrm{bitnot}}{\mathsf{z}_{\ell}})=|\mathsf{x}_{\ell}|$
  $\hbox{\rm\sc reg}$ __m512d $\mathsf{Y}$ = _mm512_andnot_pd($\mathsf{z}$, $\mathsf{y}$); // $\mathsf{Y}_{\ell}=\mathsf{y}_{\ell}\mathop{\mathrm{bitand}}(\mathop{\mathrm{bitnot}}{\mathsf{z}_{\ell}})=|\mathsf{y}_{\ell}|$
  $\hbox{\rm\sc reg}$ __m512d $\mathsf{m}$ = _mm512_min_pd($\mathsf{X}$, $\mathsf{Y}$); // $\mathsf{m}_{\ell}=\min\{\mathsf{X}_{\ell},\mathsf{Y}_{\ell}\}$
  $\hbox{\rm\sc reg}$ __m512d $\mathsf{M}$ = _mm512_max_pd($\mathsf{X}$, $\mathsf{Y}$); // $\mathsf{M}_{\ell}=\max\{\mathsf{X}_{\ell},\mathsf{Y}_{\ell}\}$
  $\hbox{\rm\sc reg}$ __m512d $\mathsf{q}$ = _mm512_div_pd($\mathsf{m}$, $\mathsf{M}$); // $\mathsf{q}_{\ell}=\mathsf{m}_{\ell}/\mathsf{M}_{\ell}$
  $\hbox{\rm\sc reg}$ __m512d $\mathsf{Q}$ = _mm512_max_pd($\mathsf{q}$, $\mathsf{z}$); // $\mathsf{Q}_{\ell}=\mathop{\mathrm{fmax}}(\mathsf{q}_{\ell},\mathsf{z}_{\ell})$
  $\hbox{\rm\sc reg}$ __m512d $\mathsf{S}$ = _mm512_fmadd_pd($\mathsf{Q}$, $\mathsf{Q}$, $\mathsf{o}$); // $\mathsf{S}_{\ell}=\mathop{\mathrm{fma}}(\mathsf{Q}_{\ell},\mathsf{Q}_{\ell},\mathsf{o}_{\ell})$
  $\hbox{\rm\sc reg}$ __m512d $\mathsf{s}$ = _mm512_sqrt_pd($\mathsf{S}$); // $\mathsf{s}_{\ell}=\mathop{\mathrm{sqrt}}(\mathsf{S}_{\ell})$
  $\hbox{\rm\sc reg}$ __m512d $\mathsf{h}$ = _mm512_mul_pd($\mathsf{M}$, $\mathsf{s}$); // $\mathsf{h}_{\ell}=\mathsf{M}_{\ell}\cdot\mathsf{s}_{\ell}$
  return $\mathsf{h}$; // $\mathsf{h}_{\ell}^{}\approx\sqrt{\mathsf{x}_{\ell}^2+\mathsf{y}_{\ell}^2}$, for all lanes $\ell$, $1\le\ell\le\mathfrak{p}=8$
} // $\hbox{\rm\sc reg}$ stands for $\hbox{\rm\tt register const}$
\end{lstlisting}
\end{table}

The input array $\mathbf{x}$ is assumed to reside in a contiguous
memory region with the natural alignment, i.e., each element has an
address that is an integer multiple of the datatype's size in bytes,
$\mathfrak{s}$, and thus can be thought of as consisting of at most
three parts.  The first part is \textsc{head}, possibly empty,
comprising the elements that lie before the first one aligned to the
vector size, i.e., that has an address divisible by
$\mathfrak{p}\cdot\mathfrak{s}$.  A non-empty \textsc{head} means that
$\mathbf{x}$ is not vector-aligned.  The second part is a (possibly
empty) sequence of groups of $\mathfrak{p}$ elements.  The last part,
\textsc{tail}, also possibly empty, is vector-aligned but has fewer
than $\mathfrak{p}$ elements.  Not all three parts are empty, because
$n\ge 1$.  Vector loads from a non-vector-aligned address might be
slower, so the presence of a non-empty \textsc{head} has to be dealt
with somehow.  The simplest but suboptimal solution, that guarantees
the same numerical results with and without \textsc{head}, is to use
the aligned-load instructions when \textsc{head} is empty, and the
unaligned-load ones otherwise.  Algorithms using the former will be
denoted by $\mathfrak{a}$ in the superscript, and those that employ
the latter by $\mathfrak{u}$.  Also, \textsc{tail} has to be loaded in
a special way to avoid accessing the unallocated memory.  Masked
vector loads, e.g., can be used to fill the lowest lanes of a vector
register with the elements of \textsc{tail}, while setting the higher
lanes to zero.  A possible situation with $n=13$ and $\mathfrak{p}=8$,
where \textsc{head} might have, e.g., three, and \textsc{tail} two
elements, is illustrated as
\begin{displaymath}
  \settowidth{\fbw}{\fbox{$x_{10}$}}
  \settowidth{\fbh}{\framebox{\framebox[\fbw][c]{$x_1$}\framebox[\fbw][c]{$x_2$}\framebox[\fbw][c]{$x_3$}}}
  \settowidth{\fba}{\framebox{\framebox[\fbw][c]{$x_4$}\framebox[\fbw][c]{$x_5$}\framebox[\fbw][c]{$x_6$}\framebox[\fbw][c]{$x_7$}\framebox[\fbw][c]{$x_8$}\framebox[\fbw][c]{$x_9$}\framebox[\fbw][c]{$x_{10}$}\framebox[\fbw][c]{$x_{11}$}}}
  \settowidth{\fbt}{\framebox{\framebox[\fbw][c]{$x_{12}$}\framebox[\fbw][c]{$x_{13}$}}}
  \begin{gathered}
    \framebox{\framebox[\fbw][c]{$\mathcolor{Blue}{x_1}$}\framebox[\fbw][c]{$\mathcolor{Blue}{x_2}$}\framebox[\fbw][c]{$\mathcolor{Blue}{x_3}$}}\framebox{\framebox[\fbw][c]{$x_4$}\framebox[\fbw][c]{$x_5$}\framebox[\fbw][c]{$x_6$}\framebox[\fbw][c]{$x_7$}\framebox[\fbw][c]{$x_8$}\framebox[\fbw][c]{$x_9$}\framebox[\fbw][c]{$x_{10}$}\framebox[\fbw][c]{$x_{11}$}}\framebox{\framebox[\fbw][c]{$\mathcolor{Brown}{x_{12}}$}\framebox[\fbw][c]{$\mathcolor{Brown}{x_{13}}$}}\\
    \makebox[\fbh][c]{\text{\textsc{head}}}\makebox[\fba][c]{\text{the aligned full-vector subarray}}\makebox[\fbt][c]{\text{\textsc{tail}}}
  \end{gathered}\,.
\end{displaymath}

The non-unit-stride option of $L_{\mathtt{x}}$, when its argument
$\mathtt{incx}>1$ and the array elements to be accessed are assumed to
be separated by $\mathtt{incx}-1$ elements (so not contiguous), can be
realized here with the vector gather instructions and a possible
associated performance penalty.  Such an option, as well as the one
for $\mathtt{incx}\le-1$, where the elements are accessed in the
opposite order, is left for future work.

Listing~4 specifies the $Z_{\mathtt{D}}^{\mathfrak{a}}$, and comments
on the $Z_{\mathtt{D}}^{\mathfrak{u}}$ algorithm.  For brevity, the
$X$ and $Y$ algorithms are omitted but can easily be deduced, or their
implementation can be looked up in the supplementary material.  The
notation follows Listing~2 and Listing~3.  Structurally,
$Z_{\mathtt{D}}^{\mathfrak{a}}$ checks for the terminating conditions
of the recurrence, deals with \textsc{tail} if required, otherwise
splits $\mathbf{x}$ into two parts, the first one having a certain
number of full, aligned vectors (i.e., no \textsc{tail}), and calls
itself recursively on both parts, similarly to $R_{\mathtt{D}}$.  This
algorithm, however, returns a vector of partial norms, that has to be
reduced further to the final $\underline{\|\mathbf{x}\|_F}$, what is
described separately.

\begin{table}
\begin{minipage}{\textwidth}\centering\tablecaptionfont
  Listing~4: The $Z_{\mathtt{D}}$ algorithm in OpenCilk C\@.
\end{minipage}
\begin{lstlisting}
__m512d $Z_{\mathtt{D}}^{\mathfrak{a}}$(const $\hbox{\rm\sc integer}$ n, const double *const x) { // assume $\text{n}>0$
  register const __m512d $\mathsf{z}$ = _mm512_set1_pd(-0.0); // $\mathsf{z}_{\ell}=-0.0$
  const $\hbox{\rm\sc integer}$ r = (n & 7); // $r=n\bmod\mathfrak{p}$, the number of elements in $\hbox{\rm\sc tail}$
  const $\hbox{\rm\sc integer}$ m = ((n >> 3) + (r != 0)); // $m=\left\lceil n/\mathfrak{p}\right\rceil$
  if (m == 1) { // $1\le n\le\mathfrak{p}$, so there is only one vector, either full ($r=0$) or $\hbox{\rm\sc tail}$
    if (r == 0) return _mm512_andnot_pd($\mathsf{z}$, _mm512_load_pd(x));$^{\dagger}$ // a full vector
    if (r == 1) return _mm512_andnot_pd($\mathsf{z}$, _mm512_mask_load_pd($\mathsf{z}$, 0x01, x));$^{\dagger}$
    if (r == 2) return _mm512_andnot_pd($\mathsf{z}$, _mm512_mask_load_pd($\mathsf{z}$, 0x03, x));$^{\dagger}$
    if (r == 3) return _mm512_andnot_pd($\mathsf{z}$, _mm512_mask_load_pd($\mathsf{z}$, 0x07, x));$^{\dagger}$
    if (r == 4) return _mm512_andnot_pd($\mathsf{z}$, _mm512_mask_load_pd($\mathsf{z}$, 0x0F, x));$^{\dagger}$
    if (r == 5) return _mm512_andnot_pd($\mathsf{z}$, _mm512_mask_load_pd($\mathsf{z}$, 0x1F, x));$^{\dagger}$
    if (r == 6) return _mm512_andnot_pd($\mathsf{z}$, _mm512_mask_load_pd($\mathsf{z}$, 0x3F, x));$^{\dagger}$
    if (r == 7) return _mm512_andnot_pd($\mathsf{z}$, _mm512_mask_load_pd($\mathsf{z}$, 0x7F, x));$^{\dagger}$
  } // if $m=1$ return $[|x_1|\cdots|x_{\mathfrak{p}}|]$, or $|\hbox{\rm\sc tail}|=[|x_1|\cdots|x_r|\,0_{r+1}\cdots 0_{\mathfrak{p}}]$ if $r>0$
  register __m512d fp, fq;
  if (m == 2) { // $\mathfrak{p}+1\le n\le 2\mathfrak{p}$
    fp = _mm512_load_pd(x);$^{\dagger}$ // load the full left vector; if the right one is $\hbox{\rm\sc tail}\ldots$
    fq = (r ? $Z_{\mathtt{D}}^{\mathfrak{a}}$(r, (x + 8)) : _mm512_load_pd(x + 8));$^{\dagger}$ // $\ldots Z_{\mathtt{D}}^{\mathfrak{a}}$ gives $|\hbox{\rm\sc tail}|$
    return v8_hypot(fp, fq);
  } // if $m=2$ return $\mathop{\mathrm{v}\mathfrak{p}\mathrm{\_hypot}}([x_1\cdots x_{\mathfrak{p}}],[x_{\mathfrak{p}+1}\cdots x_{2\mathfrak{p}}])$ or $\mathop{\mathrm{v}\mathfrak{p}\mathrm{\_hypot}}([x_1\cdots x_{\mathfrak{p}}],|\hbox{\rm\sc tail}|)$
  const $\hbox{\rm\sc integer}$ p = (((m >> 1) + (m & 1)) << 3); // $w=\left\lceil m/2\right\rceil\ge 2,\quad p=w\cdot\mathfrak{p}$
  const $\hbox{\rm\sc integer}$ q = (n - p); // $q=n-p\le p$
  $\hbox{\rm\sc cilk\_scope}$ { // optional parallelization with OpenCilk
    fp = $\hbox{\rm\sc cilk\_spawn}$ $Z_{\mathtt{D}}^{\mathfrak{a}}$(p, x); // call $Z_{\mathtt{D}}^{\mathfrak{a}}$ on $\mathbf{x}_p=[x_1\cdots x_p]$ with $w$ full vectors
    fq = $Z_{\mathtt{D}}^{\mathfrak{a}}$(q, (x + p)); // call $Z_{\mathtt{D}}^{\mathfrak{a}}$ on $\mathbf{x}_q=[x_{p+1}\cdots x_n]$ with $m-w$ vectors
  } // $(\mathsf{f}_{[p]})_{\ell}\approx\sqrt{x_{\ell}^2+x_{\ell+\mathfrak{p}}^2+\cdots+x_{\ell+(w-1)\mathfrak{p}}^2},\quad(\mathsf{f}_{[q]})_{\ell}\approx\sqrt{x_{\ell+p}^2+x_{\ell+p+\mathfrak{p}}^2+\cdots}$
  return v8_hypot(fp, fq); // $\mathop{\mathrm{v}\mathfrak{p}\mathrm{\_hypot}}(\mathsf{f}_{[p]},\mathsf{f}_{[q]})$
} // $^{\dagger}Z_{\mathtt{D}}^{\mathfrak{u}}$: if $\mathbf{x}$ is not aligned to $64\,\mathrm{B}$, use $\text{*loadu*}$ instead of the $\text{*load*}$ operations
\end{lstlisting}
\end{table}

The conclusion from~\eqref{e:r} is still valid in the vector case,
i.e., the elements of $\mathbf{x}$ are loaded from memory in the array
order.  However, a partial norm in the lane $\ell$ is computed from
the elements in the same lane, their indices being separated by an
integer multiple of $\mathfrak{p}>1$.  Let, e.g., $\mathfrak{p}=4$
and $n=16$ ($m=4$).  Then, $\mathbf{x}$ might be
\begin{displaymath}
  \mathbf{x}=\begin{bmatrix}
  \fbox{$\mathcolor{Maroon}{x_1}\,\mathcolor{PineGreen}{x_2}\,\mathcolor{Red}{x_3}\,\mathcolor{CadetBlue}{x_4}$}&
  \fbox{$\mathcolor{Maroon}{x_5}\,\mathcolor{PineGreen}{x_6}\,\mathcolor{Red}{x_7}\,\mathcolor{CadetBlue}{x_8}$}&
  \fbox{$\mathcolor{Maroon}{x_9}\,\mathcolor{PineGreen}{x_{10}}\,\mathcolor{Red}{x_{11}}\,\mathcolor{CadetBlue}{x_{12}}$}&
  \fbox{$\mathcolor{Maroon}{x_{13}}\,\mathcolor{PineGreen}{x_{14}}\,\mathcolor{Red}{x_{15}}\,\mathcolor{CadetBlue}{x_{16}}$}
  \end{bmatrix},
\end{displaymath}
assuming \textsc{head} and \textsc{tail} are empty.  The final vector
of partial norms returned is
\begin{displaymath}
  \mathop{Y_{\mathtt{D}}^{}}(16,\mathbf{x})\approx\begin{bmatrix}
  \sqrt{(\mathcolor{Maroon}{x_1}^2+\mathcolor{Maroon}{x_5}^2)+(\mathcolor{Maroon}{x_9}^2+\mathcolor{Maroon}{x_{13}}^2)}\\
  \sqrt{(\mathcolor{PineGreen}{x_2}^2+\mathcolor{PineGreen}{x_6}^2)+(\mathcolor{PineGreen}{x_{10}}^2+\mathcolor{PineGreen}{x_{14}}^2)}\\
  \sqrt{(\mathcolor{Red}{x_3}^2+\mathcolor{Red}{x_7}^2)+(\mathcolor{Red}{x_{11}}^2+\mathcolor{Red}{x_{15}}^2)}\\
  \sqrt{(\mathcolor{CadetBlue}{x_4}^2+\mathcolor{CadetBlue}{x_8}^2)+(\mathcolor{CadetBlue}{x_{12}}^2+\mathcolor{CadetBlue}{x_{16}}^2)}
  \end{bmatrix}^T\!\!\!=
  \left[\left(\sqrt{x_{\ell}^2+x_{\ell+\mathfrak{p}}^2+x_{\ell+2\mathfrak{p}}^2+x_{\ell+3\mathfrak{p}}^2}\right)_{\!\ell}^{}\right],
\end{displaymath}
where $1\le\ell\le\mathfrak{p}$, i.e.,
$\mathcolor{Maroon}{\ell=\text{lane~1}}$ or
$\mathcolor{PineGreen}{\ell=\text{lane~2}}$ or
$\mathcolor{Red}{\ell=\text{lane~3}}$ or
$\mathcolor{CadetBlue}{\ell=\text{lane~4}}$.  The vectorized
algorithms' results from one system, even if the OpenCilk parallelism
is used, are therefore bitwise reproducible on another with the same
$\mathfrak{p}$, but not with a different one.  This is in contrast
with the scalar algorithms, that are always unconditionally
reproducible, except for $B$, which is platform dependent by design.

The recursive algorithms do not require much stack space for their
variables.  Their recursion depth is
$\left\lceil\lg(\max\{n/\mathfrak{p},1\})\right\rceil$, so a stack
overflow is unlikely.

It might be too expensive to enforce any particular order of the
elements within each vector of the partial norms.  However, at least
the final, output vector of $Z_{\mathtt{D}}$ can be sorted
non-decreasingly, without function calls and in the vectorized
fashion, as described in~\cite{Bramas-17}, what might improve
accuracy, by reducing the smaller norms first.  This has been
implemented for $Z_{\mathtt{D}}$, but might be extended to other
routines.

One option for reducing the final value $\mathsf{f}$ of a vectorized
algorithm to $\underline{\|\mathbf{x}\|_F}$ is to split $\mathsf{f}$
into two vectors of the length $\mathfrak{p}/2$, and to compute the
vector $\mathrm{hypot[f]}$ of them, repeating the process until
$\mathfrak{p}=1$.  Schematically, if $\mathfrak{p}=8$, e.g.,
\begin{equation}
  \begin{aligned}
    \mathsf{f}&=[f_1\,f_2\,f_3\,f_4\,f_5\,f_6\,f_7\,f_8]\rightarrow\mathop{\mathrm{v4\_hypot[f]}}([f_1\,f_2\,f_3\,f_4],[f_5\,f_6\,f_7\,f_8])\\
    &\rightarrow[f_1'\,f_2'\,f_3'\,f_4']\rightarrow\mathop{\mathrm{v2\_hypot[f]}}([f_1'\,f_2'],[f_3',f_4'])\\
    &\rightarrow[f_1''\,f_2'']\rightarrow\mathop{\mathrm{v1\_hypot[f]}}(f_1'',f_2'')\rightarrow\underline{\|\mathbf{x}\|_F}.
  \end{aligned}
  \label{e:vred}
\end{equation}
But for a large $n$ the final reduction should not affect the overall
performance much, so it is possibly more accurate to compute the norm
of $\mathsf{f}$, and thus of $\mathbf{x}$, by $A$.  For this,
$\mathsf{f}$ has to be stored from a vector register into a local
array on the stack.

\emph{The recommendation for $\mathtt{xNRMF}$ is to select
$Z_{\mathtt{x}}$, with $A_{\mathtt{x}}$ for the final reduction}.  If
$\mathbf{x}$ is vector-aligned, call $Z_{\mathtt{x}}^{\mathfrak{a}}$,
else call $Z_{\mathtt{x}}^{\mathfrak{u}}$, and reduce the output
vector in either case to $\underline{\|\mathbf{x}\|_F}$ by
$A_{\mathtt{x}}$.  If $\mathrm{cr\_hypot[f]}$ is unavailable, consider
$B_{\mathtt{x}}$ or~\eqref{e:vred} instead of $A_{\mathtt{x}}$.
Similarly, $X$ and $Y$ have to be paired with a final reduction
algorithm $R$.  In the following, $X$, $Y$, and $Z$ are redefined to
stand for those algorithms paired with $A$.

The simplicity of the recursive algorithms allows for a speedup if the
vector length is increased beyond $512$ bits and the routines from
Listings~3 and~4 are re-written accordingly.  A limiting factor might
be the use of one division and one square root for each hypotenuse
operation, what deserves attention in future work.
%
%
\subsection{OpenMP parallelization and multi-dimensional arrays}\label{ss:3.1}
%
%
The recursive algorithms can alternatively be parallelized by
OpenMP~\cite{OMPARB-24}, by splitting the input array to approximately
equally sized contiguous chunks, each of which is given to a different
thread to compute its norm by a vectorized sequential recursive
algorithm.  Then, the final norm is reduced from the threads' partial
ones by noting that $\mathrm{hypot[f]}$ can be used as a user-defined
reduction operator in \texttt{omp declare reduction} directives.
However, since the reduction order is unspecified, the reproducibility
for any fixed number of threads greater than two would in theory be
jeopardized.  In practice, the Intel's OpenMP implementation, e.g.,
allows setting the environment variable
\texttt{KMP\_DETERMINISTIC\_REDUCTION} to \texttt{TRUE}.

The OpenMP parallelization is better suited for computing the
Frobenius norm of a multi-dimensional array.  If all elements of the
array are stored contiguously, then the array can be regarded as
one-dimensional.  If not, e.g., when an $\mathtt{M}\times\mathtt{N}$
matrix $\mathtt{A}$ in the Fortran order is stored with the leading
dimension $\mathtt{LDA}$ larger then the actual number of rows (i.e.,
when $\mathtt{LDA}>\mathtt{M}$ for
$\mathtt{A}(\mathtt{LDA},\mathtt{N})$), then each thread should
compute the norm of a subset of contiguous lower-dimensional
subarrays.  In the matrix example, those subarrays would be the
columns of the matrix, or its rows if it is stored in the C array
order.  The partial norms would then be reduced as described in the
previous paragraph.  This principle applies also for
higher-dimensional arrays: if $\mathtt{A}$ is allocated as
$\mathtt{A}(\mathtt{LDA},\mathtt{N}_2,\mathtt{N}_3,\ldots,\mathtt{N}_k)$
and $\mathtt{LDA}>\mathtt{M}$, then there are
$\mathtt{N}_2\times\mathtt{N}_3\times\cdots\times\mathtt{N}_k$
columns, the norms of which can be computed in parallel and reduced as
described.
%
%
\section{Computation of the vector $p$-norm}\label{s:4}
%
%
A method for computing the vector $p$-norm~\eqref{e:p} can be used to
calculate the unitarily invariant Schatten $p$-norm of a
matrix~\cite{Horn-Johnson-12}, which in turn finds applications in,
e.g., image
reconstruction~\cite{Lefkimmiatis-et-al-13a,Lefkimmiatis-et-al-13b}.
From the singular value decomposition of a matrix $G$ as
$G=U\Sigma V^{\ast}$ and $\mathbf{s}=[\sigma_1\cdots\sigma_n]$, the
Shatten $p$-norm is obtained as $\|G\|_{S_p}=\|\mathbf{s}\|_p$.

In the case of $p=\infty$, the construction of $\mathtt{xNRMP}$ from
$\mathtt{xNRMF}$ is trivial.  The vectorized hypotenuse operation
$\mathrm{v}\mathfrak{p}\mathrm{\_hypot}$ has to be replaced by
$\mathrm{v}\mathfrak{p}\mathrm{\_maxabs}$, where
\begin{equation}
  \mathop{\mathrm{v}\mathfrak{p}\mathrm{\_maxabs}}(\mathsf{x},\mathsf{y})=\mathop{\mathtt{\_mm?\_max\_pd}}(\mathop{\mathtt{\_mm?\_abs\_pd}}(\mathsf{x}),\mathop{\mathtt{\_mm?\_abs\_pd}}(\mathsf{y})),
  \label{e:vpI}
\end{equation}
and this operation is exact.  When $p=1$, the replacement for the
hypotenuse is
\begin{equation}
  \mathop{\mathrm{v}\mathfrak{p}\mathrm{\_sumabs}}(\mathsf{x},\mathsf{y})=\mathop{\mathtt{\_mm?\_add\_pd}}(\mathop{\mathtt{\_mm?\_abs\_pd}}(\mathsf{x}),\mathop{\mathtt{\_mm?\_abs\_pd}}(\mathsf{y})).
  \label{e:vp1}
\end{equation}
The absolute values in~\eqref{e:vpI} and~\eqref{e:vp1} are required
only at the lowest level of the recursion, since at the higher ones
all intermediate results are already non-negative.

It remains to generalize the $\mathtt{xNRMF}$ case ($p=2$) to any
other $p>0$.  Given real and finite $x$ and $y$, let
$\mathrm{M}=\max\{|x|,|y|\}$ and $\mathrm{m}=\min\{|x|,|y|\}$, and
observe that
\begin{equation}
  \|[x\,y]\|_p^{}=\!\sqrt[p]{|x|^p+|y|^p}=\mathrm{M}\sqrt[p]{1+q^p},\quad
  \mathrm{m}>0\!\implies\!q=\frac{\mathrm{m}}{\mathrm{M}},\
  \mathrm{m}=0\!\implies\!q=0.
  \label{e:sp}
\end{equation}
This is exactly how $\mathrm{v1\_hypot[f]}$ works when $p=2$.  For
$0<p<1$, $\|\mathbf{x}\|_p$ from~\eqref{e:p} is not a norm, but a
quasi- (or pre-)norm.  That case, although supported for not too small
$p$, is not in the focus here, but for its numerous applications see,
e.g., \cite{Bruckstein-et-al-09,Lai-Wang-11}.

In~\eqref{e:sp}, for $p\ge 1$ it holds
$1\le\sqrt[p]{1+q^p}\le\sqrt[p]{2}$, since $0\le q\le 1$, so
unwarranted overflow in an evaluation of~\eqref{e:sp} can occur only
due to rounding errors.  For $p\gg 1$ it is advisable not to compute
$\underline{q}^p$ directly, to avoid its underflow.  Instead, consider
\begin{equation}
  1+\underline{q}^p=1+(\underline{q}^{p/2})^2\approx\mathop{\mathrm{fma[f]}}(\underline{q^{p/2}},\underline{q^{p/2}},1).
  \label{e:qp}
\end{equation}

Let $\mathop{\mathrm{pow[f]}}(x,y)$ be any function that approximates
$x^y$ with a bounded relative error.  Its correctly rounded variant,
$\mathrm{cr\_pow[f]}$, is provided by the
CORE-MATH~\cite{Sibidanov-et-al-22} project.  Then, name the scalar
operation from Listing~5 that computes~\eqref{e:sp} as
$\mathrm{v1\_lp[f]}$, and substitute $\mathrm{v1\_lp[f]}$ for
$\mathrm{hypot[f]}$ in the scalar recursive algorithms $A$ and $B$.
For $A$ use $\mathrm{cr\_pow[f]}$ in $\mathrm{v1\_lp[f]}$, and
$\mathrm{\_\_builtin\_pow[f]}$ for $B$.  This completes the
generalization of $R_{\mathtt{x}}$ from the Frobenius to the $p$-norm,
but yet \emph{without any relative accuracy guarantees for a general}
$p\ge 1$ that would be similar to Theorem~\ref{thm:3}.

\begin{table}
\begin{minipage}{\textwidth}\centering\tablecaptionfont
  Listing~5: The $\mathrm{v1\_lp}$ operation in C\@.
\end{minipage}
\begin{lstlisting}
static inline double v1_lp(const double p, const double x, const double y) {
  const double X = __builtin_fabs(x); // $\text{X}=|\text{x}|$
  const double Y = __builtin_fabs(y); // $\text{Y}=|\text{y}|$
  const double m = __builtin_fmin(X, Y); // $\text{m}=\min\{\text{X},\text{Y}\}$
  const double M = __builtin_fmax(X, Y); // $\text{M}=\max\{\text{X},\text{Y}\}$
  const double q = (m / M); // might be a $\text{NaN}$ if, e.g., $\text{m}=\text{M}=0$, but$\ldots$
  const double Q = __builtin_fmax(q, 0.0); // $\ldots\text{Q}$ should not be a $\text{NaN}$
  const double S = pow(Q, (p * 0.5)); // $\text{S}\approx\text{Q}^{(\text{p}/2)}$
  const double Z = __builtin_fma(S, S, 1.0); // $\text{Z}\approx 1+\text{Q}^{\text{p}}$
  const double C = pow(Z, (1.0 / p)); // $\text{C}\approx\sqrt[\text{p}]{\text{Z}}$
  return (M * C); // $\text{M}\sqrt[\text{p}]{1+(\text{m}/\text{M})^{\text{p}}}\approx\sqrt[\text{p}]{|\text{x}|^{\text{p}}+|\text{y}|^{\text{p}}}$
} // if one argument of $\mathrm{fmin}$ or $\mathrm{fmax}$ is a $\text{NaN}$, the other argument is returned
\end{lstlisting}
\end{table}

Vectorization of $\mathrm{v1\_lp[f]}$ is straightforward, except for
$\mathrm{pow[f]}$.  With AVX-512F and the Intel's C/C++ compiler, the
intrinsics $\mathtt{\_mm512\_pow\_pd}$ and $\mathtt{\_mm512\_pow\_ps}$
are available.  Otherwise, the SLEEF~\cite{Shibata-Petrogalli-20}
functions $\mathtt{Sleef\_powd8\_u10avx512f}$ and
$\mathtt{Sleef\_powf16\_u10avx512f}$, with at most one ulp of error,
are recommended.  The other vector instruction subsets are similarly
covered.  This way $\mathrm{v}\mathfrak{p}\mathrm{\_lp[f]}$ is
obtained.

Replacing $\mathrm{v8\_hypot}$ with $\mathrm{v8\_lp}$ in Listing~4
completes the definition of the algorithm $Z_{\mathtt{D}}$ for the
vector $p$-norm computation.  The vectors containing the values of
$p/2$ and $\underline{1/p}$ should be defined once, at the highest
level of the recursion, instead of at each invocation of
$\mathrm{v}\mathfrak{p}\mathrm{\_lp}$, but that would probably require
a manual vector register assignment and a pure assembly implementation
of the whole algorithm.

If $\underline{q}=1$ in~\eqref{e:qp}, then, for a given
$\mathrm{pow[f]}$, there exist the smallest $p'>1$ such that
$\mathop{\mathrm{pow[f]}}(z,\underline{1/p'})=1$ for
$1\le z\le 2=1+\underline{q}^{p'\!/2}$, and thus
$\underline{\|[x\,y]\|_{p'}}=\mathrm{M}=\|[x\,y]\|_{\infty}$, due
to~\eqref{e:sp}, what agrees with
$\displaystyle\lim_{p\to\infty}\|\mathbf{x}\|_p=\|\mathbf{x}\|_{\infty}$.
This way the cutoff value of $p$, above which $\mathtt{xNRMP}$ should
switch to the faster code for $p=\infty$, can be determined.  A test
reveals that with $\mathrm{cr\_pow}$, $2^{52}<p'\le 2^{53}$, and with
$\mathrm{cr\_powf}$, $2^{23}<p'\le 2^{24}$.
%
%
\section{Numerical testing}\label{s:5}
%
%
The algorithms for the Frobenius norm computation were
tested\footnote{See
\url{https://github.com/venovako/VecNrmP/blob/master/testing.md} for
more setup details.} with GCC 14.2.1 on an Intel Xeon Cascadelake CPU,
running at 2.9\,GHz, while those for the vector $p$-norm were tested
with OpenCilk 3.0 on an Intel Xeon Phi 7210 CPU\@.  The timing
variability between the runs on the former system might be greater
than expected since its use was not exclusive, i.e., the machine load
was not predictable.

The testing setup is described in Section~\ref{s:2}.  Here, the timing
comparisons are shown first.  Let
$\mathop{\mathfrak{t}}(M_{\mathtt{x},t}^{\mathcal{D}})$ stand for the
wall time of the execution of $M_{\mathtt{x}}^{}$ in the run $t$ on
$\mathbf{x}_t^{}$ generated with the distribution $\mathcal{D}$ and
the seed $s_t^{\mathcal{D}}$.  Then, ``slowdown'' and ``speedup'' of
$M_{\mathtt{x},t}^{\mathcal{D}}$ versus
$N_{\mathtt{x},t}^{\mathcal{D}}$, are defined one reciprocally to the
other as
\begin{equation}
  \mathop{\mathrm{slowdown}_N^p}(M_{\mathtt{x},t}^{\mathcal{D}})=\mathop{\mathfrak{t}}(M_{\mathtt{x},t}^{\mathcal{D}})/\mathop{\mathfrak{t}}(N_{\mathtt{x},t}^{\mathcal{D}}),\quad
  \mathop{\mathrm{speedup}_N^p}(M_{\mathtt{x},t}^{\mathcal{D}})=\mathop{\mathfrak{t}}(N_{\mathtt{x},t}^{\mathcal{D}})/\mathop{\mathfrak{t}}(M_{\mathtt{x},t}^{\mathcal{D}}).
  \label{e:speed}
\end{equation}
In~\eqref{e:speed}, $N$ stands for a ``baseline'' algorithm, that
should be the most performant one, and $p$ denotes if the Frobenius
($p=2$) or another $p$-norm was computed.

Figure~\ref{fig:2} shows the slowdown of $M\in\{A,B,H\}$ versus $N=Z$
for $p=2$.  The scalar recursive algorithms are consistently slower
than $\mathtt{xNRMF}$, so only the vector ones should be compared
further, as done in Figure~\ref{fig:3}.  In no case $X$ was faster
than $Z$, and $Y$ only slightly, in a few cases.  This justifies the
vectorization with $\mathfrak{p}$ as large as possible, while
demonstrating that $Y$ is a reasonable fall-back for machines with
256-bit-wide vectors.  Therefore, $Z$ and $Y$ are the vectorized
recursive algorithm to be compared in the following with widely used
methods for computing the Frobenius norm, such as the $\mathtt{xNRM2}$
routines from the Reference LAPACK ($L$) and the Intel's sequential
Math Kernel Library, and $\mathtt{reproBLAS\_xnrm2}$ from ReproBLAS\@.

\begin{figure}
  \centering
  \includegraphics[keepaspectratio]{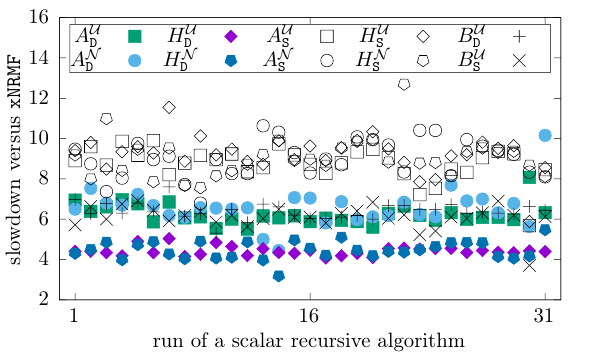}
  \caption{Slowdown~\eqref{e:speed} of the scalar recursive algorithms versus $\mathtt{xNRMF}$.}
  \label{fig:2}
\end{figure}

\begin{figure}
  \centering
  \includegraphics[keepaspectratio]{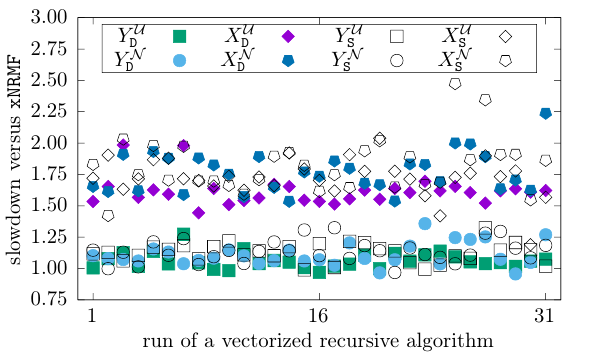}
  \caption{Slowdown~\eqref{e:speed} of the vectorized recursive algorithms versus $\mathtt{xNRMF}$.}
  \label{fig:3}
\end{figure}

Let the MKL's $\mathtt{xNRM2}$ routine be denoted by $J_{\mathtt{x}}$,
and the one from ReproBLAS by $K_{\mathtt{x}}$ (i.e.,
$K_{\mathtt{x}}\in\{\mathtt{reproBLAS\_dnrm2},\mathtt{reproBLAS\_snrm2}\}$).
In all measurements, $J$ was the fastest, albeit not publicly
specified method.  Thus, for Figure~\ref{fig:4}, $N=J$ has been taken.
It can be concluded that $\mathtt{xNRMF}$ is \emph{about twice slower}
than $K$, which in turn is somewhat slower than the MKL's method.
\emph{All three algorithms exhibited the relative accuracy of less
than two $\varepsilon_{\mathtt{x}}$ on the test inputs}.  The
strengths of $\mathtt{xNRMF}$ thus do not lie in its performance, but
in its simplicity, portability, and generalizability.

\begin{figure}
  \centering
  \includegraphics[keepaspectratio]{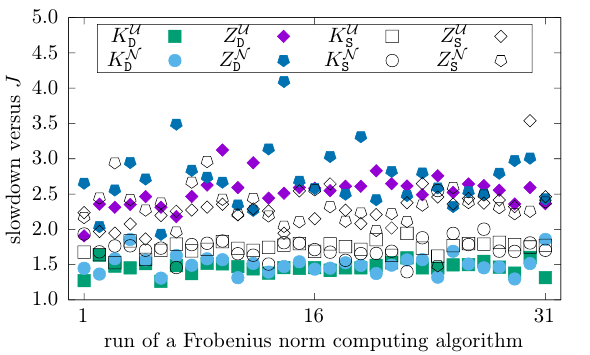}
  \caption{Slowdown~\eqref{e:speed} of ReproBLAS and $\mathtt{xNRMF}$ versus the MKL.}
  \label{fig:4}
\end{figure}

Yet, compared to $N=L$, the algorithms $Z$ and $Y$ are in many cases
faster, and only in a few somewhat slower, as shown in
Figure~\ref{fig:5}.  Therefore, wherever $L$ is used, $n$ is large,
and $J$ and $K$ are not available (i.e., mostly on non-Intel
architectures), a vectorized recursive algorithm is a viable, highly
relatively accurate replacement.

\begin{figure}
  \centering
  \includegraphics[keepaspectratio]{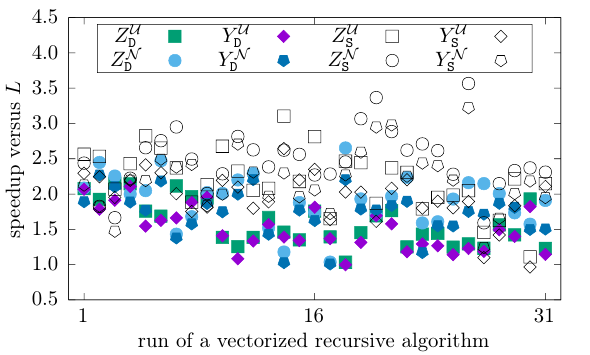}
  \caption{Speedup~\eqref{e:speed} of the most performant recursive vectorized algorithms versus $L$.}
  \label{fig:5}
\end{figure}

The OpenCilk parallelization is entirely optional.  It was tested
using input arrays of the length $n=2^{30}$, with
$\text{\texttt{CILK\_NWORKERS}}=2^k$, $0\le k\le 6$, worker threads.
The wall times of the parallel $Z_{\mathtt{D}}$ executions was
compared to the single-threaded ($k=0$) timing.  The speedup in each
run was consistently close to \texttt{CILK\_NWORKERS}.

For $p\ne 2$, the recursive algorithms are more of a prototype than an
optimized implementation, as described.  Therefore, only the maximal
relative error~\eqref{e:relerr} over all runs with a given $p$ is
shown in Table~4, also obtained with OpenCilk and $n=2^{30}$.

\begin{table}
  \begin{minipage}{\textwidth}\centering\tablecaptionfont
    Table~4: The maximal observed relative error~\eqref{e:relerr} of
    $A_{\mathtt{D}}$ and $Z_{\mathtt{D}}$ in multiples of
    $\varepsilon_{\mathtt{D}}$ for several $p$.
  \end{minipage}\\[1ex]
  \begin{tabular}{@{}cr@{}}\toprule
    $\approx p$ & $\mathop{\mathrm{relerr}}[A_{\mathtt{D}}]$\\\midrule
    $1/2$ & $2.958723$\\
    $2/3$ & $4.173733$\\
    $1$ & $1.253333$\\
    \bottomrule
  \end{tabular}\hfill
  \begin{tabular}{@{}cr@{}}\toprule
    $\approx p$ & $\mathop{\mathrm{relerr}}[A_{\mathtt{D}}]$\\\midrule
    $\sqrt{2}$ & $1.945139$\\
    $e$ & $3.161467$\\
    $\pi$ & $2.295947$\\
    \bottomrule
  \end{tabular}\hfill
  \begin{tabular}{@{}cr@{}}\toprule
    $\approx p$ & $\mathop{\mathrm{relerr}}[Z_{\mathtt{D}}]$\\\midrule
    $1/2$ & $3.374945$\\
    $2/3$ & $4.174019$\\
    $1$ & $1.253383$\\
    \bottomrule
  \end{tabular}\hfill
  \begin{tabular}{@{}cr@{}}\toprule
    $\approx p$ & $\mathop{\mathrm{relerr}}[Z_{\mathtt{D}}]$\\\midrule
    $\sqrt{2}$ & $3.890276$\\
    $e$ & $3.471359$\\
    $\pi$ & $3.222620$\\
    \bottomrule
  \end{tabular}
\end{table}
%
%
\section{Conclusions and future work}\label{s:6}
%
%
The Frobenius norm of an array $\mathbf{x}$ of the length $n$,
$\|\mathbf{x}\|_F$, might be computed with a significantly better
accuracy for large $n$ than with the Reference BLAS routine
$\mathtt{xNRM2}$, while staying in the same precision $\mathtt{x}$, by
using $\mathtt{xNRMF}$, a vectorized recursive algorithm proposed
here.  The performance of $\mathtt{xNRMF}$ should not differ much from
that of $\mathtt{xNRM2}$, but is lower than what the Intel's MKL and
ReproBLAS achieve.

Overflow avoidance of every intermediate result of $\mathtt{xNRMF}$ is
a purposefully built-in property of the algorithm.  All operations are
performed in the datatype of the input elements, and neither any
scaling nor elaborate accumulation schemes, as
in~\cite{Ahrens-et-al-20}, are required.  This simplicity comes with a
price in the terms of performance.

A more extensive testing is left for future work, where the magnitudes
of the elements of input arrays vary far more than in the tests
performed here.  It is also possible to construct an input array, or
sometimes permute a given one, that will favor $\mathtt{xNRM2}$ over
$\mathtt{xNRMF}$ in the terms of the result's accuracy, as hinted
throughout the paper.  Thus, it is important to bear in mind how both
algorithms work and choose the one better suited to the expected
structure and length of input arrays.  However, as $n$ increases, the
relative error of $\mathtt{xNRMF}$ grows at most logarithmically with
$n$, much slower than that of $\mathtt{xNRM2}$.  It is expected that
on a majority of large inputs $\mathtt{xNRMF}$ will exhibit a
noticeably lower error, at par with the MKL and ReproBLAS\@.

Unlike the routines from the closed-source MKL, an experienced user
can implement $\mathtt{xNRMF}$ on another vector architecture in a
day.  Of most interest would be those with the vector lengths beyond
$512$ bits, like, e.g., NEC SX-Aurora TSUBASA\footnote{See
\url{https://www.nec.com/en/global/solutions/hpc/sx/index.html}.}.
Portability of $\mathtt{xNRMP}$ depends on the availability of a
vectorized $\mathrm{pow[f]}$ function.

The unconditionally reproducible algorithm $A$ has already found an
application in a Jacobi-type method for the hyperbolic singular value
decomposition~\cite{Hari-Novakovic-26}.  Future work will focus on
improving the performance of $\mathtt{xNRMF}$ and $\mathtt{xNRMP}$, as
indicated throughout the paper.  The gains for the latter algorithm
might be significant.
%
%
\section*{Acknowledgements}
%
%
Some of the computing resources used have remained available to the
author after the project IP--2014--09--3670 ``Matrix Factorizations
and Block Diagonalization Algorithms''\footnote{See the MFBDA
project's web page at \url{https://web.math.pmf.unizg.hr/mfbda}.} by
Croatian Science Foundation expired.  The author would also like to
thank Dean Singer for his material support and declares no competing
interests.
%
%

%
%
\end{document}